\numberwithin{equation}{section}
\def\A{\mathcal A}
\def\H{\mathcal H}
\def\R{\mathbb R}
\def\N{\mathbb N}
\newcommand{\dist}{\mathop{\mathrm{dist}}}
\def\e{\varepsilon}
\def\s{\sigma}
\def\S{\Sigma}
\def\vphi{\varphi}
\def\om{\omega}
\def\g{\gamma}
\def\de{\delta}
\def\Id{{\rm Id}}
\def\spt{{\rm spt}}
\def\pa{\partial}
\def\00{{\bf 0}}
\def\F{\mathcal{F}}
\def\P{\mathcal{P}}
\def\CC{\mathcal{C}}
\renewcommand{\a}{\alpha}
\renewcommand{\b}{\beta}
\newcommand{\D}{\Delta}
\renewcommand{\L}{\Lambda}
\renewcommand{\om}{\omega}
\newcommand{\cc}{\subset\subset}
\def\Lip{{\rm Lip}\,}
\newcommand\res{\mathop{\hbox{\vrule height 7pt width .3pt depth 0pt \vrule height .3pt width 5pt depth 0pt}}\nolimits}
\def\weak{\stackrel{*}{\rightharpoonup}}
\def\C{\mathbf{C}}
\def\D{\mathsf{D}}
\def\Id{\mathrm{Id}}
\theoremstyle{plain}
\newtheorem{theorem}{Theorem} [section]
\newtheorem{lemma}[theorem]{Lemma}
\theoremstyle{definition}
\newtheorem{definition}[theorem]{Definition}
\newtheorem{remark}[theorem]{Remark}
\newtheorem*{ack}{Acknowledgements}
\title{A direct approach to Plateau's problem in any codimension}
\author{G. De Philippis}
\address{Institut f\"ur Mathematik, Universitaet Z\"urich, Winterthurerstrasse 190, CH-8057 Z\"urich, Switzerland}
\email{guido.dephilippis@math.uzh.ch}
\author{A. De Rosa}
\address{Institut f\"ur Mathematik, Universitaet Z\"urich, Winterthurerstrasse 190, CH-8057 Z\"urich, Switzerland}
\email{antonio.derosa@math.uzh.ch}
\author{F. Ghiraldin}
\address{Institut f\"ur Mathematik, Universitaet Z\"urich, Winterthurerstrasse 190, CH-8057 Z\"urich, Switzerland}
\email{francesco.ghiraldin@math.uzh.ch}
\begin{document}

\begin{abstract} 

 This paper aims to propose a direct approach to solve the  Plateau's problem in codimension higher than one. The problem is 
 formulated as the minimization of the Hausdorff measure among a family of \(d\)-rectifiable closed subsets of $\R^n$: 
 following the previous work \cite{DelGhiMag} the existence result is obtained by a compactness principle valid under fairly general assumptions on the class of competitors. Such class is then specified to give meaning to boundary conditions. 
We also show  that the obtained minimizers are regular up to a set of dimension less than \((d-1)\). 
\end{abstract}

\maketitle

\section{Introduction}\label{intro}

Plateau's problem consists in looking for a surface of minimal area among those surfaces spanning a given boundary. A considerable amount of the development of Geometric Measure Theory in the last fifty years has been devoted to provide generalized concepts of surface, area and  of ``spanning a given boundary'', in order to apply the direct methods of the calculus of variations to the Plateau's problem. In particular we recall the notions of sets of finite perimeter \cite{DeGiorgiSOFP1,DeGiorgiSOFP2}, of currents \cite{federer60} and of varifolds \cite{Allard, Allardboundary, Almgren68}, introduced respectively by De Giorgi, Federer, Fleming, Almgren and  Allard. A more ``geometric'' approach was proposed by Reifenberg in \cite{reifenberg1}, where Plateau's problem was set as the minimization of Hausdorff \(d\)-dimensional measure among compact sets and the notion of spanning a given boundary was given in term of inclusions of homology groups.

Any of these approach has some drawbacks: in particular, not all the ``reasonable'' boundaries can be obtained by the above notions and not always the solutions are allowed to have the type of singularities observed by soap bubble (the so called Plateau's laws). Recently  in \cite{HarrisonPugh14} Harrison and Pugh, see also \cite{Harrison2014}, proposed a new notion of spanning a boundary, which seems to include all reasonable physical boundaries and they have been able to show, in the co-dimension one case, existence of least area surfaces spanning a given boundary.

In the recent paper \cite{DelGhiMag},  De Lellis, Maggi and the third author have proposed a direct approach to the Plateau's problem, based on the ``elementary'' theory of Radon measures and on a deep result of Preiss concerning rectifiable measures.  Roughly speaking they showed, in the co-dimension one case, that every time one has a class which contains ``enough'' competitors (namely the cone and the cup  competitors, see \cite[Definition 1]{DelGhiMag}) it is always possible to show that the infimum of the Plateau's problem is achieved by the area of a  rectifiable set. They then applied this result to provide a new proof of Harrison and Pugh theorem as well as to show  the existence of sliding minimizers, a new notion of minimal sets proposed by David in \cite{davidplateau,DavidBeginners} and inspired by Almgren's \((\mathbf{M},0,\infty)\),  \cite{Almgren76}.

\medskip

In this note, we extend the result \cite{DelGhiMag} to any co-dimension. More precisely, we prove that every time the class of competitors for the Plateau's Problem consists of rectifiable sets and it is closed by Lipschitz deformations, it is always possible to show that the infimum is achieved by a  compact set \(K\) which is, away from the ``boundary'',  an analytic manifold outside a closed set of Hausdorff dimension at most \((d-1)\), see Theorem \ref{thm generale} below for the precise statement. We then apply this result to provide existence of sets spanning a given boundary according to the natural generalization of the notion introduced by Harrison and Pugh, Theorem \ref{thm generale}, and to show the existence of sliding minimizers in any co-dimension, Theorem \ref{thm david}.

Although the general strategy of the proof is the same of \cite{DelGhiMag}, some non-trivial modifications have to be done in order to deal with sets of any co-dimension. In particular, with respect to \cite{DelGhiMag}, we use a different notion of ``good class'', the main reason being the following:  one of the key step of the proof of  our main result consists in showing a precise density lower bound for the measure obtained as limit of the sequence of Radon measures naturally associated to a minimizing sequence \((K_j)\), see Steps 1  and 4 in the proof of Theorem \ref{thm generale}. In order to obtain such a lower bound, instead of relying on relative isopermetric inequalities on the sphere as in \cite{DelGhiMag} (which are peculiar of the co-dimension one case)  we use the deformation theorem of David and Semmes in \cite{DavidSemmes} to obtain suitable competitors,  following a strategy already introduced by  Federer and Fleming for rectifiable currents, see \cite{federer60} and \cite{Almgren76}.  Moreover since our class is essentially closed by Lipschitz deformations,  we are actually  able to  prove that any set achieving the infimum is  a stationary varifold and that, in addition, it is  smooth outside a closed set of relative co-dimension one (this does not directly follows by Allard's regularity theorem, see Step 7 in the proof of Theorem \ref{thm generale}). Simple examples show that this regularity is actually optimal.

\medskip

In order to precise  state our main results, let us introduce some notations and definitions, referring to Section \ref{notation} for more details. We will always work in \(\R^n\) and \(1\le d\le n \) will always be an integer number, we recall that a set \(K\) is said to be \(d\)-rectifiable if it can be covered up to an \(\H^d\) negligible set by countably many \(C^1\) manifolds, see \cite[Chapter 3]{SimonLN}, where \(\H^d\) is the \(d\)-dimensional Hausdorff measure.  We also  let $\mathsf{Lip}(\R^n)$ be the space of Lipschitz maps in $\R^n$.

\begin{definition}[Lipschitz deformations]\label{d:deform}
Given a ball $B_{x,r}$, we let $\mathfrak D(x,r)$ be the set of functions $\vphi:\R^n \rightarrow \R^n$ 
such that $\varphi(z)=z$ in $\R^n\setminus B_{x,r}$ and which are smoothly isotopic 
to the identity inside $B_{x,r}$, namely those for which there exists a smooth isotopy $\lambda:[0,1]\times \R^n\rightarrow\R^n$ such that 
$$\lambda(0,\cdot) = \Id, \quad \lambda(1,\cdot)=\vphi, \quad \lambda(t,h)=h \quad\forall\,(t,h)\in [0,1]\times (\R^n \setminus B_{x,r}) \quad \mbox{ and } $$
$$ \lambda(t,\cdot) \mbox{\ is a diffeomorphism of } \R^n \ \forall t \in [0,1].
$$
We finally set $\D(x,r):=\overline{\mathfrak D(x,r)}^{C^0}\cap \mathsf{Lip}(\R^n)$, the intersection of the Lipschitz maps with the closure of $\mathfrak D(x,r)$ with respect to the uniform topology.
\end{definition}

The following definition describes the properties we require to the comparison sets: the key property we ask 
for $K'$ to be a competitor of $K$ is that $K'$ must be close in energy to sets obtained from $K$ via deformation maps in Definition \ref{d:deform}. 
This allows a larger flexibility on the choice of the admissible sets, since a priori $K'$ might not belong to the competition class.

\begin{definition}[Deformed competitors and good class]\label{def good class}
Let $H\subset \mathbb R^{n}$ be closed. 
Given $K\subset \mathbb R^{n}\setminus H$ relatively closed countably $\H^d$-rectifiable and  $B_{x,r}\subset \mathbb R^{n}\setminus H$, 
a {\em deformed competitor} for $K$ in $B_{x,r}$ is any set of the form
\begin{equation*}
 \varphi \left (  K \right ) \quad \mbox{ where } \quad \varphi \in \D(x,r).
\end{equation*}

Given a family $\mathcal{P} (H)$ of relatively closed \(d\)-rectifiable subsets $K\subset \mathbb R^{n}\setminus H$, 
we say that $\mathcal{P} (H)$ is a {\em good class} if for every $K\in\mathcal{P} (H)$, for every $x\in K$ 
and for a.e. $r\in (0, \dist (x, H))$
\begin{equation}
  \label{inf good class}
  \inf \big\{ \H^d (J) : J\in \mathcal{P} (H)\,,J\setminus \overline{B_{x,r}} =K\setminus \overline
{B_{x,r}} \big\} \leq \H^d (L)\,
\end{equation}
whenever $L$ is any deformed competitor for $K$ in $B_{x,r}$.
\end{definition}

Once we fix a closed set $H$, we can formulate Plateau's problem in the class $\mathcal{P}(H)$:
\begin{equation}  \label{plateau problem generale}
m_0 :=  \inf \big\{\H^d (K) : K\in \mathcal{P}(H)\big\}\,.
\end{equation}
We will say that a sequence $(K_j) \subset \mathcal{P} (H)$ is a {\em minimizing sequence} if   $\mathcal{H}^d (K_j) \downarrow m_0$. The following  theorem is our main result and establishes the behavior of minimizing sequences.

\begin{theorem}\label{thm generale}
Let $H\subset \mathbb R^{n}$ be closed and $\mathcal{P} (H)$ be a good class. Assume the infimum in Plateau's problem \eqref{plateau problem generale} is finite and let 
$(K_j)\subset \mathcal{P}(H)$ be a minimizing sequence. Then, up to subsequences, the measures $\mu_j := \H^d \res K_j$ converge weakly$^\star$ in $\mathbb R^{n}\setminus H$ 
to a measure $\mu = \H^d \res K$, where $K = \spt\, \mu \setminus H$ is a countably $\H^d$-rectifiable set. Furthermore: 
\begin{itemize}
\item[(a)] the integral varifold naturally associated to $\mu$ is stationary in $\R^{n}\setminus H$;
\item[(b)] $K$ is a  real analytic submanifold outside a relatively closed set $\S\subset K$ with 
$\dim_\H(\S) \leq d-1$.
\end{itemize}
In particular, $\liminf_j\H^d(K_j)\ge \H^d(K)$ and if $K \in \mathcal{P} (H)$, then $K$ is a minimum for \eqref{plateau problem generale}.
\end{theorem}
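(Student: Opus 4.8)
The plan is to run the direct method on the minimizing sequence $(K_j)$, extracting first a limiting Radon measure and then upgrading its regularity by a monotonicity/density-lower-bound argument combined with Preiss' rectifiability theorem and Allard's theory. First, since $m_0<\infty$, the measures $\mu_j=\H^d\res K_j$ are uniformly bounded on compact subsets of $\R^n\setminus H$, so by weak$^\star$ compactness of Radon measures we may pass to a subsequence with $\mu_j\weak\mu$ in $\R^n\setminus H$. Set $K:=\spt\mu\setminus H$. The crucial quantitative input is a \emph{uniform density lower bound}: there exist $c_0>0$ and a radius scale such that $\mu(B_{x,r})\ge c_0\omega_d r^d$ for every $x\in K$ and every sufficiently small $r<\dist(x,H)$. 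To prove this, fix $x\in\spt\mu$ and a good radius $r$; if the density were too small, one uses the David--Semmes deformation theorem to push $K_j$ (for $j$ large, so that $\mu_j(B_{x,r})$ is also small) onto the $(d-1)$-skeleton of a fine cubical grid inside $B_{x,r}$, producing a deformed competitor $L$ of controlled measure; by the good-class property \eqref{inf good class} this yields competitors $J\in\mathcal P(H)$ with $\H^d(J)$ strictly below $m_0$ for the tail of the sequence, contradicting minimality. The same deformation argument, applied in annuli, also shows that $\mu$ has no mass concentrated on lower-dimensional pieces, i.e.\ the upper density $\theta^{*d}(\mu,x)$ is finite and bounded below $\mu$-a.e.

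The second block establishes rectifiability and stationarity. From the density lower bound, $\mu\res K = \theta\,\H^d\res K$ with $0<c_0\le\theta\le C<\infty$ and $\mu(\R^n\setminus(H\cup K))=0$; moreover the lower bound forces $\H^d(K)\le C\mu(\R^n\setminus H)<\infty$. To get rectifiability of $K$ one invokes \emph{Preiss' theorem}: it suffices to check that the $d$-dimensional density $\Theta^d(\mu,x)$ exists and is positive and finite for $\mu$-a.e.\ $x$. This is where the deformation competitors re-enter: a cone-type comparison (replace $K_j\cap B_{x,r}$ by the cone over $K_j\cap\partial B_{x,r}$, realized as a Lipschitz deformation) gives an almost-monotonicity formula for $r\mapsto \mu(B_{x,r})/r^d$ along the minimizing sequence, which survives in the limit and yields the existence of the density. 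Hence $K$ is countably $\H^d$-rectifiable and $\mu=\H^d\res K$ (the multiplicity is $1$ because good-class competitors can always be taken with no hidden multiplicity — this is part of the cube-deformation normalization). For (a): given $\varphi\in\D(x,r)$, approximating $\varphi$ by isotopies and using \eqref{inf good class} shows $\H^d(K)\le\liminf_j\H^d(\varphi(K_j)) $ in a way that forces $\H^d(\varphi(K)\cap B_{x,r})\ge\H^d(K\cap B_{x,r})$; differentiating the one-parameter family $\lambda(t,\cdot)$ at $t=0$ gives $\delta V(X)=0$ for all compactly supported $X$ in $\R^n\setminus H$, so the associated integral varifold $V$ is stationary.

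The third block is regularity, item (b). Since $V$ is a stationary integral $d$-varifold with density $\ge c_0>0$, Allard's regularity theorem applies at every point where the density is close enough to $1$ and the tilt-excess is small; by upper semicontinuity of the density and the monotonicity formula, the set $\S$ of points where this fails is relatively closed in $K$. To bound $\dim_\H\S\le d-1$ one does \emph{not} rely on Allard alone (which would only give a small singular set), but uses that good-class minimizers are locally $\H^d$-minimizing among Lipschitz deformations: at a point of $\S$, blow up; the blow-up is a stationary cone that is again an area-minimizing good-class set, hence by the dimension-reduction argument of Federer--Almgren the singular set has dimension at most $d-1$, and at the top stratum the cone is a plane with multiplicity $\ge 2$ or a union of half-planes, which a further deformation competitor excludes, leaving a real-analytic manifold by elliptic regularity (minimal surface equation). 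The concluding sentence is then immediate: $\H^d(K)=\mu(\R^n\setminus H)\le\liminf_j\mu_j(\R^n\setminus H)=\liminf_j\H^d(K_j)=m_0$ by weak$^\star$ lower semicontinuity, and if $K\in\mathcal P(H)$ then $\H^d(K)\ge m_0$ by definition of the infimum, so $\H^d(K)=m_0$ and $K$ is a minimizer.

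\textbf{Main obstacle.} The hard part is the density lower bound (Steps 1 and 4 in the authors' numbering): one must simultaneously control the deformation of $K_j$ onto a skeleton (which is where the David--Semmes deformation theorem replaces the spherical isoperimetric inequalities of \cite{DelGhiMag}) and ensure that the resulting skeleton set can be traded, via \eqref{inf good class}, for an \emph{admissible} competitor $J\in\mathcal P(H)$ without losing the quantitative gain — the "for a.e.\ $r$" and "$J\setminus\overline{B_{x,r}}=K\setminus\overline{B_{x,r}}$" clauses in the good-class definition are exactly tuned to make this trade possible, and getting the constants to close against $m_0$ for the whole tail of the minimizing sequence is the delicate point. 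Everything downstream (rectifiability via Preiss, stationarity, Allard plus dimension reduction) is then comparatively standard.
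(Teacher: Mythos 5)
Your proposal follows the paper's overall architecture quite closely: extract a weak$^\star$ limit measure, prove a density lower bound via the David--Semmes deformation theorem (replacing the codimension-one isoperimetric inequalities of \cite{DelGhiMag}), obtain almost-monotonicity from cone competitors, invoke Preiss' theorem for rectifiability, and finish with Allard plus a stratification/dimension-reduction argument. These are exactly the ingredients the paper uses, so the route is the same.

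However, there is a genuine gap where you write that ``the multiplicity is $1$ because good-class competitors can always be taken with no hidden multiplicity --- this is part of the cube-deformation normalization.'' This is not true, and it sweeps under the rug what is actually the most delicate portion of the argument. After Preiss' theorem one only knows $\mu=\theta\,\H^d\res K$ with $\theta\ge\theta_0$; nothing in the deformation theorem's normalization forces $\theta=1$. The paper devotes two full steps to this. To show $\theta(x)\ge 1$ at a point of approximate tangency one must (i) clear the small mass lying off the tangent rectangle $R_{r,\e r}$ by iterated David--Semmes deformations, (ii) squeeze the remaining set onto the tangent $d$-plane by a map $P$ with $\Lip P\le 1+C\sqrt\e$, and (iii) exploit the lower density estimate $\theta_0$ to derive a contradiction with minimality if $\mu(Q_r)/r^d<1$. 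To show $\theta(x)\le 1$ one again squeezes onto the tangent plane, observing that the image cannot exceed $r^d$ in measure, and then uses the monotonicity formula to control the annular contribution. Neither argument is a normalization --- both use the good-class inequality \eqref{inf good class} against the minimizing sequence in an essential way.

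This gap propagates downstream. Your sketch of (a) --- ``differentiating $\lambda(t,\cdot)$ at $t=0$ gives $\delta V(X)=0$'' --- tacitly assumes one can transfer $\H^d(K)\le\H^d(\phi(K))$ to the limit set $K$ from information about $K_j$, but the mechanism for doing so (Besicovitch covering, local projections $P_i$ onto approximate tangent planes, and comparison via the area formula) hinges precisely on $\theta=1$: without it you cannot identify $\mu(B_i)$ with $\H^d(K\cap B_i)$ nor control $\H^d(\phi(P_i(K_j\cap B_i)))$ against $\H^d(\phi(\hat K\cap B_i))$. Similarly, in (b), the conclusion that a tangent cone supported on a $d$-plane has unit multiplicity --- needed to enter Allard's regime at points of $A_d\setminus A_{d-1}$ --- is itself the squeezing argument re-run at small scale through the minimizing sequence; invoking Allard ``at every point where the density is close enough to $1$'' presupposes the work you have skipped. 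So while your overall plan is the right one, the $\theta=1$ step needs to be carried out in full; it cannot be dismissed as a normalization.
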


We wish to apply Theorem \ref{thm generale} to two definitions of boundary conditions. The first one is the natural generalization of the one 
considered in \cite{HarrisonPugh14}:
\begin{definition}\label{def plateau first}
Let $H$ be a closed set in $\R^{n}$. 

Let us consider the family
\[
\CC_H=\big\{\g:S^{n-d}\to\R^{n}\setminus H:\mbox{$\g$ is a smooth embedding of $S^{n-d}$ into $\R^{n}$}\big\}\,.
\]
We say that $\CC\subset\CC_H$ is closed by isotopy (with respect to $H$) if $\CC$ contains all elements
$\gamma'\in \CC_H$ belonging to the same smooth isotopy class $[\g]$ in $\R^{n}\setminus H$ 
of any $\gamma \in \CC$, see \cite[Ch. 8]{Hirsch94}. Given $\CC\subset\CC_H$ closed by isotopy, we say that a relatively closed subset $K$ of $\R^{n}\setminus H$ is a $\CC$-spanning set of $H$ if
\begin{equation*}
\mbox{$K\cap\g\ne\emptyset$ for every $\g\in\CC$}\,.
\end{equation*}
We denote by $\mathcal{F} (H, \CC)$ the  family of countably $\mathcal{H}^d$-rectifiable sets which are $\CC$-spanning sets of $H$.
\end{definition}

We can prove the following closure property for the class $\F(H,\CC)$:
\begin{theorem}\label{thm plateau}
Let $H$ be closed in $\R^{n}$ and $\CC$ be closed by isotopy with respect to $H$,  then:
\begin{itemize}
\item[(a)]  $\F (H,\CC)$ is a good class in the sense of Definition \ref{def good class}. 
\item[(b)] Assume the infimum \eqref{plateau problem generale} corresponding to $\P(H)=\F(H,\CC)$ is finite, then the set \(K\) provided by Theorem \ref{thm generale} belongs to \(\F(H,\CC)\). In particular  the Plateau's problem in the class \(\F(H,\CC)\) has a solution.
\end{itemize}

\end{theorem}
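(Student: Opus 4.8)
The plan is to deduce (a) from the fact that the $\CC$-spanning condition is preserved under the deformation maps of Definition \ref{d:deform}, and (b) from a Fubini/coarea argument exploiting that a minimizing sequence cannot keep mass near a test sphere that the limit set misses.

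For (a), the central claim is that if $K\subset\R^{n}\setminus H$ is countably $\H^d$-rectifiable, relatively closed and $\CC$-spanning, $B_{x,r}\subset\R^{n}\setminus H$ and $\varphi\in\D(x,r)$, then $\varphi(K)\in\F(H,\CC)$. Granting this, (a) follows at once: given a deformed competitor $L=\varphi(K)$ of $K$ in $B_{x,r}$, one checks that $L$ is countably $\H^d$-rectifiable (Lipschitz image of a rectifiable set); that $L\setminus\overline{B_{x,r}}=K\setminus\overline{B_{x,r}}$, using $\varphi\equiv\Id$ on $\R^{n}\setminus B_{x,r}$ and $\varphi(\overline{B_{x,r}})\subset\overline{B_{x,r}}$ (the latter because $\varphi$ is a uniform limit of diffeomorphisms each fixing $\R^{n}\setminus B_{x,r}$); and that $L$ is relatively closed in $\R^{n}\setminus H$, by writing $L=\varphi(K\cap\overline{B_{x,r}})\cup(K\setminus\overline{B_{x,r}})$ with the first piece compact and chasing a convergent sequence in $L$. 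By the claim, $L=J$ is then admissible in \eqref{inf good class} and $\inf\{\cdots\}\le\H^d(L)$. To prove the claim I would first treat a smooth $\varphi\in\mathfrak D(x,r)$: then $\varphi^{-1}$ is a diffeomorphism fixing $\R^{n}\setminus B_{x,r}$ and mapping $\R^{n}\setminus H$ into itself, so for $\gamma\in\CC$ the embedding $\gamma':=\varphi^{-1}\circ\gamma$ lies in $\CC_H$ and is smoothly isotopic to $\gamma$ in $\R^{n}\setminus H$ via $s\mapsto\lambda(s,\cdot)^{-1}\circ\gamma$ (smooth by the inverse function theorem); since $\CC$ is closed by isotopy, $\gamma'\in\CC$, hence $K\cap\gamma'\neq\emptyset$, and applying $\varphi$ gives $\varphi(K)\cap\gamma\neq\emptyset$. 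For general $\varphi\in\D(x,r)$ write $\varphi=\lim_k\varphi_k$ uniformly with $\varphi_k\in\mathfrak D(x,r)$; the previous step yields $p_k\in S^{n-d}$ and $y_k:=\varphi_k^{-1}(\gamma(p_k))\in K$ with $\varphi_k(y_k)=\gamma(p_k)$; since $y_k$ lies in the compact set $\overline{B_{x,r}}\cup\gamma(S^{n-d})$, pass to a subsequence $p_k\to p$, $y_k\to y$, and conclude $y\in K$ (relative closedness) and $\varphi(y)=\lim_k\varphi_k(y_k)=\gamma(p)\in\varphi(K)\cap\gamma$.

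For (b), let $K$ be the set furnished by Theorem \ref{thm generale} from a minimizing sequence $(K_j)\subset\F(H,\CC)$; it is already countably $\H^d$-rectifiable and relatively closed in $\R^{n}\setminus H$, so it remains only to show it is $\CC$-spanning. Suppose not: $K\cap\gamma=\emptyset$ for some $\gamma\in\CC$. Since $\gamma(S^{n-d})$ is a compact submanifold of $\R^{n}\setminus H$ disjoint from the closed set $\spt\mu$, fix $\rho>0$ so small that $U:=\{z:\dist(z,\gamma(S^{n-d}))<\rho\}$ is a genuine tubular neighborhood with $\overline U$ compact, $\overline U\subset\R^{n}\setminus H$ and $\overline U\cap\spt\mu=\emptyset$. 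Then $\mu(\overline U)=0$, and upper semicontinuity of mass on compact sets under $\mu_j\weak\mu$ gives $\H^d(K_j\cap U)\le\mu_j(\overline U)\to0$.

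The crux, and the step I expect to be the main obstacle, is to convert this smallness into a nearby test sphere avoiding $K_j$. Fix a finite cover of $\gamma(S^{n-d})$ by coordinate patches trivializing the normal bundle, with local orthonormal normal frames $e^{(i)}_1,\dots,e^{(i)}_d$ and a subordinate partition of unity $\chi_i$; for $\xi=(t_1,\dots,t_N)$ in a small ball $Q\subset\R^{Nd}$ put $\sigma_\xi:=\sum_{i,a}(t_i)_a\,\chi_i\,e^{(i)}_a$, a normal field along $\gamma(S^{n-d})$, and $\gamma_\xi(p):=\exp_{\gamma(p)}\bigl(\sigma_\xi(\gamma(p))\bigr)$. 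For $Q$ small, each $\gamma_\xi$ is a smooth embedding with image in $U$, isotopic to $\gamma$ inside $U\subset\R^{n}\setminus H$ (so $\gamma_\xi\in\CC$, $\CC$ being closed by isotopy), and the evaluation map $G\colon Q\times S^{n-d}\to U$, $G(\xi,p):=\gamma_\xi(p)$, is a submersion: at $\xi=0$ the $p$-derivatives span $T(\gamma(S^{n-d}))$, while for any $i$ with $\chi_i(\gamma(p))>0$ the $t_i$-derivatives span the normal space. A coarea inequality then gives $\H^{Nd}\bigl(G^{-1}(K_j\cap U)\bigr)\le C\,\H^d(K_j\cap U)\to0$, with $C$ depending only on $\gamma$ and the fixed auxiliary data; hence the image of $G^{-1}(K_j\cap U)$ under the $1$-Lipschitz projection onto $Q$ has $\H^{Nd}$-measure $<\H^{Nd}(Q)$ for $j$ large, and any $\xi^*\in Q$ outside that image satisfies $\gamma_{\xi^*}(p)\notin K_j$ for every $p$, i.e. $\gamma_{\xi^*}(S^{n-d})\cap K_j=\emptyset$. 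This contradicts that $K_j$ is $\CC$-spanning while $\gamma_{\xi^*}\in\CC$. Therefore $K$ is $\CC$-spanning, $K\in\F(H,\CC)$, and the final assertion of Theorem \ref{thm generale} shows $K$ solves \eqref{plateau problem generale} in $\F(H,\CC)$.
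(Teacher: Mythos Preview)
Your proof is correct and, for part (a), follows the paper's route almost verbatim: both arguments show that $\varphi(K)$ is $\CC$-spanning by approximating $\varphi\in\D(x,r)$ by diffeomorphisms $\varphi_k\in\mathfrak D(x,r)$ and pulling the test sphere back through $\varphi_k^{-1}$, using that $\CC$ is closed under isotopy. The paper phrases this by contradiction while you argue directly via a compactness step on the intersection points; the content is the same.

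For part (b) the paper gives no argument of its own and simply refers to \cite[Theorem 4(b)]{DelGhiMag}. Your proof fills this in with a correct and self-contained argument: once $\H^d(K_j\cap U)\to 0$ on a tubular neighborhood $U$ of the missed sphere, a submersive family $G\colon Q\times S^{n-d}\to U$ of nearby isotopic spheres together with the elementary product-type estimate $\H^{Nd}(G^{-1}(E))\le C\,\H^d(E)$ (valid for any $E$, using that in local submersion charts $G^{-1}(E)$ is a product of $E$ with a bounded open set) yields a parameter $\xi^*$ with $\gamma_{\xi^*}(S^{n-d})\cap K_j=\emptyset$, contradicting the spanning condition. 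This is precisely the mechanism used in \cite{DelGhiMag} in codimension one, where the tubular neighborhood is globally a product and no partition of unity is needed. Your construction with $N$ patches and a $\R^{Nd}$-parameter family is a safe way to guarantee submersivity without assuming triviality of the normal bundle; a lighter alternative that also works in all codimensions is to take the $n$-parameter family of small Euclidean translations $\gamma_t(p)=\gamma(p)+t$, $t\in B^n_\delta$, for which submersivity is immediate and the same coarea/projection step goes through with $Nd$ replaced by $n$.
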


The second type of boundary condition we want to consider is the one related to the notion of  ``sliding minimizers''  introduced by David in \cite{davidplateau,DavidBeginners}.
\begin{definition}[Sliding minimizers]
  Let $H\subset \R^{n}$ be closed and $K_0 \subset \R^{n}\setminus H$ be relatively closed.  We denote by $\S(H)$ the family of Lipschitz maps $\vphi:\R^{n}\to\R^{n}$ such that there exists a continuous map $\Phi:[0,1]\times\R^{n}\to\R^{n}$ with $\Phi(1,\cdot)=\vphi$, $\Phi(0,\cdot)=\Id$ and $\Phi(t,H)\subset H$ for every $t\in[0,1]$. We then define
  \[
  \A(H,K_0)=\big\{K:\mbox{$K=\vphi(K_0)$ for some $\vphi\in\S(H)$}\big\}\,
  \]
and say that $K_0$ is a sliding minimizer if $\H^d(K_0)=\inf\{\H^d(J):J\in\A(H,K_0)\}$.
\end{definition}

\begin{remark}\label{remark david}
For every $K_0 \subset \R^{n}\setminus H$  
relatively closed and  $d$-rectifiable, $\A(H,K_0)$ is a good class in the sense of Definition \ref{def good class}, 
since $\D(x,r)\subset\Sigma(H)$ for every $B_{x,r}\subset\R^n\setminus H$.
\end{remark}

Applying Theorem \ref{thm generale} to the contest of sliding minimizers we obtain the following result which is the analogous of \cite[Theorem 7]{DelGhiMag} in any codimension. Here and in the following $U_\delta (E)$ denotes the  $\delta$-neighborhood of a set  $E\subset \R^n$.
\begin{theorem}\label{thm david}
 Assume that
\begin{itemize}
\item[(i)] $K_0$ is bounded  $d$-rectifiable with $\H^d(K_0)<\infty$;
\item[(ii)] $\H^d(H)=0$ and for every $\eta>0$ there exist $\de>0$ and $\Pi\in\S(H)$ such that
  \begin{equation*}
    \label{retraction}
      \Lip\Pi\le1+\eta\,,\qquad \Pi(U_\de(H))\subset H\,.
  \end{equation*}
\end{itemize}
Then, given any minimizing sequence $(K_j)$ in the Plateau's problem corresponding to $\P(H)=\A(H,K_0)$ and any set $K$ as in Theorem \ref{thm generale}, we have
  \begin{equation*}
     \inf \big\{\H^d (J) : J\in \A (H, K_0)\big\} =\H^d(K)=\inf\big\{\H^d(J):J\in\A(H,K)\big\}\,.
  \end{equation*}
In particular $K$ is a sliding minimizer.
\end{theorem}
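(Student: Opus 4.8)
The plan is the following. Since $\D(x,r)\subset\S(H)$ whenever $B_{x,r}\subset\R^n\setminus H$, Remark \ref{remark david} shows $\A(H,K_0)$ to be a good class, so Theorem \ref{thm generale} applies to the given minimizing sequence and yields a set $K$ with $\H^d(K)\le\liminf_j\H^d(K_j)=m_0$, whose associated varifold is stationary in $\R^n\setminus H$, and which is real analytic off a relatively closed set $\S$ with $\dim_\H\S\le d-1$; in particular $\H^d(\S)=0$. Since $\Id\in\S(H)$ we have $K\in\A(H,K)$, hence $\inf\{\H^d(J):J\in\A(H,K)\}\le\H^d(K)$, and the theorem reduces to proving
\begin{equation}\label{eq:goal}
m_0\le\H^d(\vphi(K))\qquad\text{for every }\vphi\in\S(H).
\end{equation}
Indeed, \eqref{eq:goal} applied to $\vphi=\Id$ forces $\H^d(K)=m_0$, and in general gives $m_0\le\inf\{\H^d(J):J\in\A(H,K)\}\le\H^d(K)=m_0$, so all three quantities coincide. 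A tool used repeatedly is that $\S(H)$ is closed under composition — concatenating the homotopies of $\psi_1,\psi_2\in\S(H)$ produces one for $\psi_2\circ\psi_1$ — so that, writing $K_j=\vphi_j(K_0)$ with $\vphi_j\in\S(H)$, every $\psi\in\S(H)$ gives $\psi(K_j)=(\psi\circ\vphi_j)(K_0)\in\A(H,K_0)$ and hence $\H^d(\psi(K_j))\ge m_0$.

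The first main step is to show that no mass is lost, i.e.\ $\H^d(K)=m_0$; since $\H^d(K_j)\to m_0$, this is equivalent to convergence of the total masses, which combined with $\mu_j\weak\mu$ yields narrow convergence $\mu_j\to\mu$, hence $\limsup_j\H^d(K_j\setminus U)\le\H^d(K\setminus U)$ for every open $U\subset\R^n\setminus H$. I would argue by contradiction: were $\H^d(K)<m_0$, a fixed positive amount of mass would escape (along a subsequence) either towards $H$ or towards infinity. In the first case there are $\de,c>0$ with $\H^d(K_j\cap U_\de(H))\ge c$; taking $\eta$ small in hypothesis (ii) and the corresponding $\Pi\in\S(H)$ with $\Lip\Pi\le1+\eta$, $\Pi(U_\de(H))\subset H$, and using $\H^d(H)=0$, one obtains for $j$ large
\[
\H^d(\Pi(K_j))=\H^d\big(\Pi(K_j)\setminus H\big)\le(1+\eta)^d\,\H^d\big(K_j\setminus U_\de(H)\big)\le(1+\eta)^d\big(\H^d(K_j)-c\big)<m_0 ,
\]
contradicting $\Pi(K_j)\in\A(H,K_0)$; escape towards infinity is excluded by a standard truncation argument, using the boundedness of $K_0$.

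The second step uses the regularity of $K$ to ``press'' the $K_j$ onto $K$. Fixing $\vphi\in\S(H)$ and $\e>0$, one picks a compact piece $M_\e$ of $\mathrm{reg}(K):=K\setminus\S$, at positive distance from $\S$ and from $H$, with $\H^d(K\setminus M_\e)<\e$ (possible since $\mathrm{reg}(K)$ is an analytic submanifold and $\H^d(\S)=0$), together with a Lipschitz map $\rho_\e\in\S(H)$ — equal to the identity near $H$, homotopic to it through the straight-line homotopy — that collapses a thin tubular neighbourhood of $M_\e$ onto $M_\e$, is the identity outside a slightly larger tube, and has $\Lip\rho_\e$ bounded as the tube shrinks. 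Then $\rho_\e(K_j)\subset M_\e\cup E_j$ with $M_\e\subset K$ and, by Step 1 (narrow convergence, for a thin and generic choice of the tube), $\limsup_j\H^d(E_j)\le C\e$ with $C$ independent of $\e$. Since $\vphi\circ\rho_\e\circ\vphi_j\in\S(H)$ and $\vphi(M_\e)\subset\vphi(K)$,
\[
m_0\le\H^d\big(\vphi(\rho_\e(K_j))\big)\le\H^d(\vphi(K))+(\Lip\vphi)^d\,\H^d(E_j) ,
\]
and letting $j\to\infty$ and then $\e\to0$ gives \eqref{eq:goal}.

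I expect the main obstacle to be the first step: weak-$*$ convergence of the $\mu_j$ is far too weak on its own to give $\H^d(K)=m_0$, and one genuinely needs hypothesis (ii) — together with the truncation at infinity — to prevent concentration of mass on $H$, where the Lipschitz deformations of the second step could otherwise create spurious area. The second step is mostly soft, granted the regularity part of Theorem \ref{thm generale} and $\H^d(\S)=0$; the only care needed there is to keep $\rho_\e$ Lipschitz with a uniformly bounded constant as its support shrinks, so that the errors $E_j$ stay of order $\e$.
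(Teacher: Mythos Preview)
Your two-step plan --- rule out mass loss via the retraction $\Pi$ of hypothesis (ii) together with a truncation at infinity, then press $K_j$ onto a compact piece $M_\e\subset\mathrm{reg}(K)$ through a Lipschitz tubular retraction $\rho_\e\in\S(H)$ and compare with $\vphi(K)$ --- is precisely what the paper does by invoking Steps 4 and 6 of \cite[Theorem~7]{DelGhiMag} verbatim. The technical points you single out for care (choose $\eta$ first and let it determine the $\de$ of hypothesis (ii); keep $\Lip\rho_\e$ bounded independently of $\e$ by taking the tube thin relative to the curvature of $M_\e$) are exactly the right ones, and none constitutes a gap.
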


The paper is structured as follows, in Section \ref{notation} we will recall some basic definitions and recall some known theorems we are going to use, in particular Preiss rectifiability criterion and a version of the deformation theorem due to David and Semmes. In Section \ref{mainresult} we prove Theorem \ref{thm generale} and in Section \ref{harrisonanddavid} we prove Theorems \ref{thm plateau} and \ref{thm david}.

\begin{ack}
The authors are grateful to Camillo De Lellis, Francesco Maggi and Emanuele Spadaro  for many interesting comments and suggestions. This work has been supported by ERC 306247 {\it Regularity of area-minimizing currents} and by
SNF 146349 {\it Calculus of variations and fluid dynamics}. 
\end{ack}

\section{Notation and preliminaries}\label{notation}
We are going to use the following notations:  $Q_{x,l}$  denotes the closed cube centered in $x$, 
with edge length $l$; moreover we set 
\[
R_{x,a,b}:=x+\Big[-\frac a2,\frac a2\Big]^d \times \Big[-\frac b2,\frac b2\Big]^{n-d}\quad \textrm{ and} \quad B_{x,r}:=\{y \in \R^n \ : \ |y-x|<r\}.
\]
When cubes, rectangles and balls are centered in the origin, we will simply write  $ Q_{l}, \ R_{a,b}$ and $B_{r}$. Cubes and balls in the subspace $\R^d\times\{0\}^{n-d}$ are denoted with $Q_{x,l}^d$ and $B_{x,r}^{d}$ respectively. We also 
let $\omega_d$ be the Lebesgue measure of the unit ball in $\mathbb R^d$.

Let us recall the following deep structure result for Radon measures due to Preiss 
\cite{Preiss, DeLellisNOTES} which will play a key role in the proof of Theorem \ref{thm generale}.
\begin{theorem}\label{Preiss}
Let $d$ be an integer and $\mu$ a locally finite measure on $\R^n$ such that the $d$-density
$$
\theta(x):= \lim_{r\rightarrow 0}\frac{\mu(B_{x,r})}{\omega_d r^d}
$$
exists and satisfies $0<\theta(x)<+\infty$ for $\mu$-a.e. $x$. Then $\mu = \theta \H^d\res K$, where $K$ is a countably $\H^d$-rectifiable set. 
\end{theorem}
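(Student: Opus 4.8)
\medskip

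The plan is to reduce the statement to the assertion that \(\mu\) is a \emph{rectifiable measure}, i.e.\ that \(\mu\) is concentrated on a countable union of \(d\)-dimensional \(C^1\) submanifolds. Indeed, once this is known, the upper density bound \(\theta<+\infty\) forces \(\mu\ll\H^d\), and the differentiation theory of measures together with the hypothesis that the density equals exactly \(\theta\) yields \(\mu=\theta\,\H^d\res K\) with \(K\) the \(\H^d\)-\(\sigma\)-finite \(d\)-rectifiable set carrying \(\mu\). After a routine localisation (decomposing \(\R^n\) and restricting \(\mu\)) one may moreover assume \(\mu\) has compact support and pinched density \(0<a\le\theta\le b<+\infty\) on \(\spt\mu\). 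For \(\mu\)-a.e.\ \(x\) one introduces the blow-ups \(\mu_{x,r}:=r^{-d}(T_{x,r})_{\#}\mu\), \(T_{x,r}(y)=(y-x)/r\), and the set \(\mathrm{Tan}(\mu,x)\) of their weak-\(\star\) subsequential limits as \(r\to0\). The density bounds give uniform local mass bounds on \(\{\mu_{x,r}\}\), whence \(\mathrm{Tan}(\mu,x)\neq\{0\}\) for \(\mu\)-a.e.\ \(x\); and the \emph{existence} of the limit defining \(\theta(x)\) upgrades this to: every \(\nu\in\mathrm{Tan}(\mu,x)\) is \emph{centered \(d\)-uniform}, \(\nu(\overline B_{0,s})=\theta(x)\,\omega_d s^d\) for all \(s>0\).

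\medskip

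Next I would establish the locality principles of Preiss: for \(\mu\)-a.e.\ \(x\) the cone \(\mathrm{Tan}(\mu,x)\) is closed under translating the support to any of its own points and under taking tangents, so that it contains a genuinely \(d\)-uniform measure — a nonzero \(\nu\) with \(\nu(B_{z,s})=\omega_d s^d\) for \emph{every} \(z\in\spt\nu\) and \emph{every} \(s>0\). The existence of such a \(\nu\) forces \(d\) to be an integer by Marstrand's theorem (which is assumed here), and the Kirchheim--Preiss argument shows that \(\spt\nu\) is a real-analytic variety with \(\nu=\H^d\res\spt\nu\); moreover, expanding \(s\mapsto\int\Phi(|y|/s)\,\d\nu(y)\) in powers of \(s\) produces a symmetric bilinear form \(\mathbf b(\nu)\), essentially the trace-free part of the second moment of \(\nu\), whose vanishing is equivalent to \(\nu\) being \emph{flat}, i.e.\ \(\nu=\omega_d\,\H^d\res V\) for some \(d\)-plane \(V\).

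\medskip

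The heart of the proof is then to show that for \(\mu\)-a.e.\ \(x\) there is at least one flat measure in \(\mathrm{Tan}(\mu,x)\). Granting this, an independent argument — using the connectedness of \(\mathrm{Tan}(\mu,x)\) in the natural metric together with the fact that flatness, once attained, persists under the tangent operation — promotes ``some flat tangent measure'' to ``every tangent measure is flat'' at \(\mu\)-a.e.\ \(x\), and Marstrand's characterisation of rectifiable measures through flat tangent measures concludes the proof. To produce one flat tangent measure one argues by contradiction: if on a set of positive \(\mu\)-measure no tangent measure is flat, pass (via the locality principles above) to a non-flat \(d\)-uniform measure in the tangent class; a uniform measure always admits a uniform \emph{tangent measure at infinity} (blow-down), and iterating while controlling an auxiliary integer invariant one reduces to a non-flat \(d\)-uniform \(\nu\) that is flat at infinity. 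The contradiction is extracted from the monotonicity behaviour of the normalised moments \(s\mapsto s^{-d}\int\Phi(|y|/s)\,\d\nu(y)\) and of the form \(\mathbf b\): flatness at infinity forces \(\mathbf b\) to vanish in the blow-down, while a delicate expansion propagates this vanishing back to all finite scales, contradicting \(\mathbf b(\nu)\neq0\).

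\medskip

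The main obstacle is precisely this last step — Preiss's long and technical analysis of non-flat uniform measures and their blow-downs, where the moment expansions must be controlled with great precision. The tangent-measure formalism, the Kirchheim--Preiss analyticity, and Marstrand's theorem are substantial but comparatively standard ingredients; I would carry those out in detail and only sketch the propagation-of-flatness argument, referring to \cite{Preiss, DeLellisNOTES} for the complete proof.
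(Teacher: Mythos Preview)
The paper does not prove this theorem: it is stated without proof as a known deep result of Preiss, with references to \cite{Preiss, DeLellisNOTES}, and is used as a black box in Step three of the proof of Theorem~\ref{thm generale}. Your outline is a faithful sketch of Preiss's original strategy (tangent measures, locality, uniform measures, moment expansions, and the flat-at-infinity contradiction), essentially following the exposition in \cite{DeLellisNOTES}; there is nothing to compare against in the paper itself.
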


In order to apply Preiss' Theorem  we will rely on the monotonicity formula for minimal surfaces, which roughly speaking can be obtained by comparing the given minimizer with a cone. To this aim let us introduce the following definition:

\begin{definition}[Cone competitors]
In the setting of Definition \ref{def good class}, the cone competitor for $K$ in $B_{x,r}$ is the following set
\begin{equation}
  \label{cone comp}
\C_{x,r}(K) = \big(K\setminus B_{x,r}\big) \cup \big\{\lambda x+(1-\lambda)z:z\in K\cap\pa B_{x,r}\,,\lambda\in[0,1]\big\}\,.
\end{equation}
\end{definition}
Let us note that in general  a cone competitor in $B_{x,r}$ is not a deformed competitor in $B_{x,r}$. 
On the other hand as in \cite{DelGhiMag} we can show that:

\begin{lemma}\label{l:10}
Given a good class $\mathcal{P} (H)$ in the sense of Definition  \ref{def good class}, for any $K\in \mathcal{P} (H)$  countably $ \mathcal{H}^d\mbox{-rectifiable}$ and for every $x\in K$, the set $K$ verifies for a.e. $r\in (0, \dist (x, H))$:
\begin{equation*}
  \inf \big\{ \H^d (J) : J\in \mathcal{P} (H)\,,J\setminus \overline {B_{x,r}} =K\setminus \overline
{B_{x,r}} \big\} \leq \H^d (\C_{x,r}(K)). 
\end{equation*}
\end{lemma}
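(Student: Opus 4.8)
The plan is to realize the cone competitor, for a.e.\ radius $r$, as a limit of \emph{deformed} competitors and then to quote \eqref{inf good class}. Concretely, for each such $r$ I would produce maps $\varphi_k\in\D(x,r)$ with $\limsup_k\H^d(\varphi_k(K))\le\H^d(\C_{x,r}(K))$; since each $\varphi_k(K)$ is a deformed competitor for $K$ in $B_{x,r}$, property \eqref{inf good class} gives $\inf\{\H^d(J):J\in\P(H),\ J\setminus\overline{B_{x,r}}=K\setminus\overline{B_{x,r}}\}\le\H^d(\varphi_k(K))$ for every $k$, and letting $k\to\infty$ gives the claim.

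First I would pin down the admissible radii. Setting $g(\sigma):=\H^{d-1}(K\cap\pa B_{x,\sigma})$, the coarea formula on the rectifiable set $K$ applied to $u(y):=|y-x|$ yields $\int_0^R g(\sigma)\,d\sigma\le\H^d(K\cap B_{x,R})<\infty$, so $g\in L^1_{\mathrm{loc}}$ and, for a.e.\ $\sigma$, $K\cap\pa B_{x,\sigma}$ is $(d-1)$-rectifiable with $g(\sigma)<\infty$; moreover only countably many spheres carry positive $\H^d$-measure of $K$. Hence the set $\RG$ of $r\in(0,\dist(x,H))$ at which \eqref{inf good class} holds, $\H^d(K\cap\pa B_{x,r})=0$, $K\cap\pa B_{x,r}$ is $(d-1)$-rectifiable with $g(r)=:\alpha<\infty$, and $r$ is a Lebesgue point of $g$, has full measure in $(0,\dist(x,H))$; I would prove the lemma for every $r\in\RG$, using the elementary identity $\H^d(\C_{x,r}(K))=\H^d(K\setminus B_{x,r})+\tfrac rd\,\alpha$.

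The maps $\varphi_k$ would be \emph{radial squeezes}. For $0<\e<\rho<r$ let $f=f_{\rho,\e}\colon[0,\infty)\to[0,\infty)$ be the increasing piecewise-linear homeomorphism equal to $\tfrac\e\rho s$ on $[0,\rho]$, to $\e+\tfrac{r-\e}{r-\rho}(s-\rho)$ on $[\rho,r]$ and to $s$ on $[r,\infty)$ (so that $f\le\mathrm{id}$), and set $\varphi_{\rho,\e}(y):=x+f(|y-x|)\tfrac{y-x}{|y-x|}$. This is a bi-Lipschitz homeomorphism of $\R^n$, equal to the identity outside $B_{x,r}$ and equal to the homothety $y\mapsto x+\tfrac\e\rho(y-x)$ on $\overline{B_{x,\rho}}$; a routine smoothing of $f$ at its two corners exhibits $\varphi_{\rho,\e}$ as a uniform limit of elements of $\mathfrak D(x,r)$, whence $\varphi_{\rho,\e}\in\overline{\mathfrak D(x,r)}^{C^0}\cap\mathsf{Lip}(\R^n)=\D(x,r)$. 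Since $\varphi_{\rho,\e}$ is injective and the identity off $B_{x,r}$, decomposing $K$ over $\overline{B_{x,\rho}}$, the open annulus $A_\rho:=B_{x,r}\setminus\overline{B_{x,\rho}}$, and the exterior gives
\[
\H^d(\varphi_{\rho,\e}(K))=\H^d(K\setminus B_{x,r})+\Big(\tfrac\e\rho\Big)^{d}\H^d(K\cap\overline{B_{x,\rho}})+\H^d\big(\varphi_{\rho,\e}(K\cap A_\rho)\big),
\]
and the middle term is killed by letting $\e\to0$. For the last term, the area formula plus a direct diagonalization of $D\varphi_{\rho,\e}$ (radial eigenvalue $f'$, tangential eigenvalue $f/s$) shows that, for $\H^d$-a.e.\ $y\in K\cap A_\rho$, writing $s=|y-x|$ and $c=|\n^K u(y)|\in[0,1]$, the $d$-Jacobian of $\varphi_{\rho,\e}$ restricted to $T_yK$ equals
\[
\Big(\tfrac fs\Big)^{d-1}\sqrt{\big(\tfrac fs\big)^2(1-c^2)+(f')^2c^2}\ \le\ \Big(\tfrac fs\Big)^{d}+\Big(\tfrac fs\Big)^{d-1}f'\,c ,
\]
the inequality coming from $\sqrt{a^2+b^2}\le a+b$. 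Using $f/s\le1$ on $A_\rho$, the first term on the right integrates to at most $\H^d(K\cap A_\rho)\to0$ as $\rho\uparrow r$, and, since $c$ is exactly the coarea factor of $u|_K$, the coarea formula turns the second into $\int_\rho^r\big(\tfrac{f(\sigma)}{\sigma}\big)^{d-1}f'(\sigma)\,g(\sigma)\,d\sigma$.

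The only non-routine point is to estimate this last integral with the sharp constant. I would split $g=\alpha+(g-\alpha)$: for the $\alpha$-part, the substitution $\sigma'=f(\sigma)$ combined with $f^{-1}(\sigma')\ge\rho$ bounds it by $\alpha\int_\e^r(\sigma'/\rho)^{d-1}\,d\sigma'\le\alpha\,r^d/(d\,\rho^{d-1})$ — here it is essential \emph{not} to discard the weight $(f(\sigma)/\sigma)^{d-1}$ early, as replacing it by its trivial bound $1$ costs a spurious factor of $d$ — while the $(g-\alpha)$-part is $\le\tfrac{r}{r-\rho}\int_\rho^r|g-\alpha|\,d\sigma$ (using $f/s\le1$ and $f'\le r/(r-\rho)$ on $[\rho,r]$), which tends to $0$ as $\rho\uparrow r$ since $r$ is a Lebesgue point of $g$. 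Choosing $\rho_k\uparrow r$ and then $\e_k\downarrow 0$ suitably and setting $\varphi_k:=\varphi_{\rho_k,\e_k}\in\D(x,r)$, the three contributions combine to $\limsup_k\H^d(\varphi_k(K))\le\H^d(K\setminus B_{x,r})+\tfrac rd\,\alpha=\H^d(\C_{x,r}(K))$, and the lemma follows as in the first paragraph. Apart from this constant-chasing, the proof invokes only standard tools: the area and coarea formulas for rectifiable sets, the spherical slicing of $K$, and the membership in $\D(x,r)$ of a piecewise-linear radial homeomorphism supported in $B_{x,r}$.
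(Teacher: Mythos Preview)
Your proposal is correct and follows essentially the same strategy as the paper: approximate the cone competitor by a radial ``squeeze'' map in $\D(x,r)$, compute the tangential $d$-Jacobian via the area formula, and convert to a slicing integral by coarea, using the Lebesgue point property of $g(\sigma)=\H^{d-1}(K\cap\partial B_{x,\sigma})$ at $\sigma=r$ to recover the sharp constant $r/d$. The only noticeable difference is cosmetic: you use a two-parameter bi-Lipschitz family $\varphi_{\rho,\e}$ (sending $\e\downarrow 0$ and $\rho\uparrow r$), whereas the paper takes $\e=0$ from the outset---collapsing $\overline{B_{x,\rho}}$ to the point $x$---and lets the single parameter $\rho\uparrow r$; your slightly more explicit Jacobian identity and the substitution $\sigma'=f(\sigma)$ replace the paper's cruder bound $J_d^K\phi_s\le C+c\,\varphi_s'(\varphi_s/|x|)^{d-1}$, but the analysis is otherwise the same.
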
 
 \begin{proof}
Without loss of generality let us consider balls $B_r$ centered at $0$ with $B_r\cc\R^{n}\setminus H$. We assume in addition that $K\cap\pa B_r$ is $\H^{d-1}$-rectifiable with $\H^{d-1}(K\cap\pa B_r)<\infty$ and that $r$ is a Lebesgue point of $t\in(0,\infty)\mapsto \H^{d-1}(K\cap\pa B_t)$. All these conditions are fulfilled for a.e. $r$ and, again by scaling, we can assume that $r=1$ and use $B$ instead of $B_1$. For $s\in(0,1)$ let us set
\[
\vphi_s(r)=\left\{
\begin{array}{l l}
  0\,,&r\in[0,1-s)\,,
  \\
  \frac{r-(1-s)}s\,,&r\in[1-s,1]\,,
  \\
  r\,,&r\ge 1\,,
\end{array}
\right .
\]
and $\phi_s(x)=\vphi_s(|x|)\frac{x}{|x|}$ for $x\in\R^{n}$. In this way, one easily checks that  $\phi_s:\R^{n}\to\R^{n} \in \D(0,1)$. 

Since $\phi_s(K\cap \overline{B_{1-s}})=\{0\}$, we need to show that
\[
\limsup_{s\to 0^+}\H^d\big(\phi_s(K\cap (B\setminus B_{1-s}))\big)\le\frac{\H^{d-1}(K\cap\pa B)}d = \H^d(\C_{x,r}(K))\,.
\]
Let $x_0\in K\cap\partial B_t$ and let us fix an orthonormal base $\nu_1,\dots,\nu_d$ of 
the approximate tangent space $T_{x_0}K$ such that 
$\nu_i\in T_{x_0}K\cap T_{x_0}\partial B_t$ for $i\leq d-1$. 
Let 
$$
J_d^K\phi_s = \Big|(\bigwedge^d D\phi_s) (T_{x_0}K)\Big| = |D\phi_s(\nu_1)\wedge\dots\wedge D\phi_s(\nu_d)|
$$ 
be the $d$-dimensional tangential Jacobian of $\phi_s$ with respect to $K$.  Letting $I$ be the (at most countable) set of those $t\in(0,1)$ such that $\H^{d-1}(K\cap\pa B_t)>0$, we find with the aid of the area and co-area formulas,
\begin{eqnarray}\nonumber
  &&\H^d\big(\phi_s(K\cap (B\setminus B_{1-s}))\big)=\int_{K\cap(B\setminus B_{1-s})}J_d^K\phi_s\,d\H^d
  \\\label{ignoriamo}
  &=&\int_{1-s}^1\,dt\int_{K\cap\pa B_t}\frac{J_d^K\phi_s}{|\nu_d\cdot\hat x|}\,d\H^{d-1}+ \sum_{t\in I\cap(1-s,1)}\left(\textstyle{\frac{t- (1-s)}{1-s}}\right)^d\H^d( K\cap\pa B_t),
\end{eqnarray}
where $\hat x=x/|x|$ and we have used that \(|\nu_d\cdot\hat x|\) is the tangential  co-area factor of the map \(f(x)=|x|\).
We first notice that, for $t\in (1-s,1)$, $t-(1-s) \leq s$. Moreover
\[
\lim_{s\to 0}\sum_{t\in I\cap(1-s,1)}\H^d(K\cap\pa B_t)=0\,,
\]
and thus the second term in \eqref{ignoriamo} can be ignored. At the same time, for a constant $C$,
\[
J_d^K\phi_s(x)\le C+|\hat x\cdot\nu_d|\,\vphi_s'(|x|)\,\Big(\frac{\vphi_s(|x|)}{|x|}\Big)^{d-1}\,,\qquad\mbox{for $\H^d$-a.e. $x\in K$}\,.
\]
The constant $C$ gives a negligible contribution in the integral as $s\downarrow 0$; as for the second term, having $\vphi_s'=1/s$ on $(1-s,1)$, we find
\[
\int_{1-s}^1 \H^{d-1}(K\cap\pa B_t)\,\vphi_s'(t)\,\Big(\frac{\vphi_s(t)}{t}\Big)^{d-1}\,dt
=\frac1s\int_{1-s}^1 \H^{d-1}(K\cap\pa B_t)\,\Big(\frac{\vphi_s(t)}{t}\Big)^{d-1}\,dt\,.
\]
Since $t=1$ is a Lebesgue point of $t\in(0,\infty)\mapsto \H^{d-1}(K\cap\pa B_t)$, we have
\[
\lim_{s\to 0}\frac1s\int_{1-s}^1 |\H^{d-1}(K\cap\pa B_t)-\H^{d-1}(K\cap\pa B)|\,dt=0\,,
\]
so that, combining the above remarks we find
\[
\limsup_{s\to 0^+}\H^d(\phi_s(K\cap B))\le \H^{d-1}(K\cap\pa B)\,
\limsup_{s\to 0^+}\frac1s\int_{1-s}^1 \Big(\frac{\vphi_s(t)}{t}\Big)^{d-1}\,dt=\frac{\H^{d-1}(K\cap\pa B)}d\,,
\]
as required. 
 \end{proof}

The second key result we are going to use is  a deformation theorem for closed sets due to David and Semmes  \cite[]{DavidSemmes}, 
analogous to the one for rectifiable currents \cite{SimonLN,FedererBOOK}. We provide a slightly extended statement for the sake of forthcoming proofs. 

Before stating the theorem, let us introduce some further notation.
Given a closed cube $Q$ of edges length $l$ in $\R^n$ and $\e > 0$, we 
cover $Q$ with  closed smaller cubes 
with edges length $\e \ll l$, non empty intersection with $\mbox {Int} (Q)$ 
and such that the decomposition is centered in $x$ (i.e. one of the 
subcubes is centered in $x$). The family of this smaller cubes is denoted $\L_\e(Q)$. 
We set 
\begin{equation}\label{cornici}
\begin{gathered}
C_1:=\bigcup \left \{T \cap Q: T \in \L_\e(Q), T \cap \pa Q \neq \emptyset  \right \},\\
C_2:=  \bigcup \left \{T \in \L_\e(Q) :  T \not \subset C_1, T \cap \pa C_1  \neq \emptyset \right \},\\
Q_1:=\overline{Q \setminus (C_1 \cup C_2)}
\end{gathered}
\end{equation}
and consequently
$$\L_\e(Q_1 \cup C_2):= \left \{T \in \L_\e(Q) : T \subset (Q_1 \cup C_2)\right \}$$
For each nonnegative integer $m\leq n$, let $\L_{\e,m}(Q_1 \cup C_2)$ denote the collection 
of all $m$-dimensional faces of cubes in $\L_\e(Q_1 \cup C_2)$ and 
$\L^*_{\e,m}(Q_1 \cup C_2)$ will be the set of the elements of $\L_{\e,m}(Q_1 \cup C_2)$ 
which are not contained in $\pa (Q_1 \cup C_2)$. We also let  $S_{\e,m}(Q_1 \cup C_2):= \bigcup \L_{\e,m}(Q_1 \cup C_2)$ be the 
 the $m$-skeleton of order $\e$ in $Q_1 \cup C_2$.

\begin{theorem}\label{deformationcube}
Let $r>0$ and $E$ be a compact subset of $Q$  
such that $\H^d(E)<+\infty$ and $Q \subset B_{x,r}$. 
There exists a map $\Phi_{\e,E} \in \D(x,r)$ 
 satisfying the following properties:
\begin{itemize}
\item[(1)]  $\Phi_{\e,E}(x)=x \ \mbox{for} \ x \in \R^n \setminus (Q_1 \cup C_2)$;
\item[(2)]  $\Phi_{\e,E}(x)=x \ \mbox{for} \ x \in S_{\e,d}(Q_1 \cup C_2)$;
\item[(3)]  $\Phi_{\e,E}(E)\subset S_{\e,d}(Q_1 \cup C_2)\cup \pa(Q_1 \cup C_2)$;
\item[(4)]  $\Phi_{\e,E}(T)\subset T$ for every $T \in \L_{\e,m}(Q_1 \cup C_2)$, with $m=d,...,n$;
\item[(5)] either $\H^d(\Phi_{\e,E}(E\cap T))=0$ or $\H^d(\Phi_{\e,E}(E\cap T))=\H^d(T)$, for every $T \in \L^*_{\e,d}(Q_1)$;
\item[(6)]  $\H^d(\Phi_{\e,E}(E\cap T)) \leq k_1\H^d(E\cap T)$ for every $T \in \L_{\e}(Q_1 \cup C_2)$;
\end{itemize}
where $k_1$ depends only on $n$ and $d$ (but not on $\e$).
\end{theorem}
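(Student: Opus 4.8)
The plan is to construct the deformation $\Phi_{\e,E}$ by the classical Federer--Fleming push-out procedure, carried out skeleton by skeleton, and then to check that the version of the David--Semmes statement we need (with the extra bookkeeping about $C_1$, $C_2$, $Q_1$ and the quantized conclusion (5)) follows from the standard argument with only cosmetic changes. First I would reduce to the grid $\L_\e(Q)$ and dispose of the frame $C_1$: inside $C_1$ (the outermost layer of small cubes touching $\pa Q$) the map is kept equal to the identity, so that $\Phi_{\e,E}$ will indeed agree with $\Id$ outside $Q_1\cup C_2$, giving (1). The buffer $C_2$ exists precisely so that the push-out inside $Q_1\cup C_2$ does not disturb the identity region; its only role is to absorb boundary effects, and one chooses $C_2$ to be one cube thick so that faces in $\pa(Q_1\cup C_2)$ are never moved.

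The heart is the radial projection argument. Starting from the top dimension $m=n$, on each $n$-cube $T\in\L_\e(Q_1\cup C_2)$ one selects (by an averaging/Fubini argument) a center $a_T$ in the interior of $T$ not lying in the image of the (already deformed) set $E$, such that the radial projection $\pi_{a_T}$ from $a_T$ onto $\pa T$ does not blow up the $\H^d$-measure too much; a mean-value estimate shows one can choose $a_T$ with $\H^d\big(\pi_{a_T}(E\cap T)\big)\le c(n,d)\,\H^d(E\cap T)$. One then iterates down through $m=n-1,\dots,d+1$, at each stage projecting the portion of the set lying in the relative interior of an $m$-face onto its boundary, always from a well-chosen center. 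After finitely many steps the set is pushed into the $d$-skeleton $S_{\e,d}(Q_1\cup C_2)$, which gives (3); the composition of all these maps is smoothly isotopic to the identity (each $\pi_{a_T}$ can be realized as the time-one map of a smooth isotopy supported in $T$, extended by the identity), hence lies in $\mathfrak D(x,r)$ and, after the Lipschitz regularization that is implicit in taking $C^0$-limits of such maps, in $\D(x,r)$. Properties (2) and (4) are automatic from the construction, since each radial projection fixes the boundary of the face it acts on and maps each face into itself; (6) follows by composing the measure bounds from the $O(n-d)$ projection steps, the constant $k_1$ being the product of the step constants and thus depending only on $n$ and $d$.

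For the quantized alternative (5) on the top-dimensional $d$-faces, once $\Phi_{\e,E}(E)$ is contained in $S_{\e,d}$, one performs a final cleanup on each $T\in\L^*_{\e,d}(Q_1)$: if $\H^d\big(\Phi_{\e,E}(E\cap T)\big)<\H^d(T)$ there is a point of $T$ not in the image, so one may project the image out of the relative interior of $T$ onto $\pa T$, reducing the $\H^d$-measure inside $T$ to zero without increasing the measure on neighbouring faces (which are shared and not in $\pa Q_1$, which is why we restrict to $\L^*_{\e,d}(Q_1)$ — faces on $\pa(Q_1\cup C_2)$ must stay fixed); otherwise $\H^d\big(\Phi_{\e,E}(E\cap T)\big)=\H^d(T)$ already. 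Iterating over the finitely many top faces (in a fixed order, each cleanup only affecting strictly lower-skeleton faces of the current one) yields the dichotomy in (5) while preserving (1)--(4) and only worsening the constant in (6) by a factor depending on $n,d$.

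I expect the main obstacle to be bookkeeping rather than a genuinely hard estimate: one must verify that all the successive radial projections can be chosen \emph{simultaneously} face by face so that the maps on adjacent faces are compatible and the global map is well-defined and smoothly isotopic to the identity inside $B_{x,r}$, and that the buffer $C_2$ is wide enough that nothing leaks into the identity region — in particular that the final cleanup step for (5), which is the new feature relative to the textbook statement, does not move any face of $\pa(Q_1\cup C_2)$. The measure estimates themselves are the standard Federer--Fleming radial-projection bounds and require no new idea; the delicate point is organizing the induction on skeleta and on the ordering of top faces so that each step's guarantees are not destroyed by later steps.
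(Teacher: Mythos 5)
Your approach is essentially correct and parallels the paper's, with one structural difference and one gap worth flagging. The paper does not reprove the Federer--Fleming push-out: it cites Proposition~3.1 of David--Semmes outright, which delivers a map $\widetilde{\Phi_{\e,E}}\in\D(x,r)$ already satisfying (1)--(4) and (6), and then devotes the proof entirely to the quantized alternative (5). You instead rederive (1)--(4) and (6) from scratch by the radial-projection argument; that is fine and self-contained, but it is extra work the paper deliberately offloads. For (5) your idea matches the paper's: on each $T\in\L^*_{\e,d}(Q_1)$ where the image is neither null nor full, use compactness of $\Phi_{\e,E}(E)$ to find a ball $B_{\delta_T}(y_T)\subset T$ missing the image, and push radially from $y_T$ to $\pa T$ within the $d$-plane of $T$. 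Since the image lands on the $(d-1)$-skeleton this cannot increase $\H^d$ on other $d$-faces, so the finitely many cleanups can be done independently.

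The gap is in how you turn that face-by-face cleanup into a single map in $\D(x,r)$. Your phrase ``each $\pi_{a_T}$ can be realized as the time-one map of a smooth isotopy supported in $T$, extended by the identity'' is correct for the \emph{top-dimensional} push-out steps, where $T$ is an $n$-cube and ``supported in $T$'' has full-dimensional meaning. It does not apply to the cleanup step: there the radial map is defined only on a $d$-face $T$ (a measure-zero subset of $\R^n$), so ``extend by the identity off $T$'' is not even continuous, let alone Lipschitz or isotopic to $\Id$. The paper handles this explicitly by building $\Psi$ inductively up the skeleta: first define $\Psi$ on each $d$-face as the radial map (or the identity if the dichotomy already holds or $T\notin\L^*_{\e,d}(Q_1)$), then extend to each $(d+1)$-face $T'$ by decomposing $T'$ into pyramids $P_{T,T'}$ over its $d$-faces and coning off from the center via
\[
\Psi_{|P_{T,T'}}(x)=-\frac{2x_{d+1}}{\e}\,\Psi_{|T}\!\left(-\frac{x}{x_{d+1}}\frac{\e}{2}\right),
\]
and iterate up to dimension $n$; since $\Psi$ is the identity on $\pa(Q_1\cup C_2)$ it extends by the identity outside. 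This cone construction is what makes $\Psi$ a genuine Lipschitz map on $\R^n$ belonging to $\D(x,r)$ (closed under composition), and it is the one nontrivial piece of bookkeeping your sketch does not actually carry out. With that extension supplied, your argument closes; without it, property (5) produces only a skeleton-level map and the claim $\Phi_{\e,E}\in\D(x,r)$ is unjustified.
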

\begin{proof}
Proposition $3.1$ in \cite{DavidSemmes} provides a map $\widetilde{\Phi_{\e,E}}\in \D(x,r)$ 
 satisfying properties (1)-(4) and (6). 
We want to set $$\Phi_{\e,E}:=\Psi \circ \widetilde{\Phi_{\e,E}},$$
where $\Psi$ will be defined below.
We first define $\Psi$ on every $T \in \L_{\e,d}(Q_1 \cup C_2)$ distinguishing two cases
\begin{itemize}
\item[(a)]  if either $\H^d(\widetilde{\Phi_{\e,E}}(E)\cap T)=0$ or  $\H^d(\widetilde{\Phi_{\e,E}}(E)\cap T)=\H^{d}(T)$ or $T \not \in \L^*_{\e,d}(Q_1)$, then we set $\Psi_{|T}=\Id$;
\item[(b)]  otherwise since $\widetilde{\Phi_{\e,E}}(E)$ is compact, there exists $y_T \in T$ and $\delta_T>0$ such that $B_{\delta_T}(y_T)\cap \widetilde{\Phi_{\e,E}}(E)=\emptyset$; we define 
$$\Psi_{|T}(x)=x+\alpha(x-y_T) \min \left \{1,\frac{|x-y_T|}{\delta_T}\right\},$$
where $\alpha>0$ such that the point $x+\alpha(x-y_T)\in  \left (\pa T\right ) \times \{0\}^{n-d}$.
\end{itemize}
The second step is to define $\Psi$ on every $T' \in \L_{\e,d+1}(Q_1 \cup C_2)$. Without loss of generality we can assume $T'$ centered in $0$. 
We divide $T'$ in pyramids $P_{T,T'}$ with base $T\in \L_{\e,d}(Q_1 \cup C_2)$ and vertex $0$. Assuming $T \subset \{x_{d+1}=-\frac \e 2, x_{d+2},...,x_n=0\}$ and $T' \subset \{x_{d+2},...,x_n=0\}$, we set
$$\Psi_{|P_{T,T'}}(x)=-\frac{2x_{d+1}}{\e}\Psi_{|T}\left(-\frac{x}{x_{d+1}}\frac \e 2\right).$$
We iterate  this procedure on all the dimensions till to $n$, defining it well in $Q_1 \cup C_2$. 
Since $\Psi_{|\pa (Q_1 \cup C_2)}=\Id$ we can extend the map as the identity outside $Q_1 \cup C_2$.
In addition   one can easily check that $\Psi \in \D(x,r)$ and thus, since  \(\widetilde{\Phi_{\e,E}}\in \D(x,r)\) and the class \( \D(x,r)\)  is closed by composition, this concludes the proof.
\end{proof}

Later we will need to implement the above deformation of a set $E$ on a rectangle rather than a cube: 
the deformation theorem can be proved for very general cubical complexes, \cite{almgren86}; however, for the sake of exposition, we limit ourselves to note the following simple observation: given a closed rectangle $R:=R_{x,a,b}$, using a linear map, 
and covering this time with rectangles homothetic to $R$, one can easily deduce the same thesis as in Theorem \ref{deformationcube}. 
The only key point is the area estimate (6), which holds with a constant $k_1$ depending on the ratio $a/b$. We will apply this construction 
to rectangles where the ratio is approximately $2$.

\section{Proof of Theorem \ref{thm generale}}\label{mainresult}
\begin{proof}
  [Proof of Theorem \ref{thm generale}] Up to extracting subsequences we can assume the existence of a Radon measure $\mu$ on $\R^n\setminus H$ such that
\begin{equation}\label{muj va a mu}
  \mu_j\weak\mu\,,\qquad\mbox{as Radon measures on $\R^n\setminus H$}\,,
\end{equation}
where $\mu_j=\H^d \res K_j$. We set $K = \spt\, \mu \setminus H$ and divide the argument in several steps.

\medskip

\noindent {\it Step one}: We show the existence of $\theta_0=\theta_0(n,d)>0$ such that
\begin{equation}
  \label{lower density estimate mu ball}
 \mu(B_{x,r})\ge\theta_0\,\omega_d r^d\,,\qquad \textrm{ \(x\in\spt\,\mu\) and \( r<d_x :=\dist(x,H)\)}
\end{equation}
To this end it is  sufficient to prove the existence of $\beta=\beta(n,d)>0$ such that
\begin{equation*}
  \mu(Q_{x,l})\ge\beta\, l^d\,,\qquad \textrm{ \(x\in\spt\,\mu\) and \( l<2d_x/\sqrt{n}\)}\, .
\end{equation*}
Let us  assume by contradiction that there exist $x\in\spt\,\mu$ and $l<2d_x/\sqrt{n}$ such that 
\begin{equation*}
 \frac{\mu(Q_{x,l})^\frac{1}{d}}{l}<\b.
 \end{equation*}
We claim that this assumption, for $\beta$ chosen sufficiently small depending only on \(d\) and \(n\), implies that for some $l_\infty \in (0,l)$
\begin{equation}
 \label{absurd lower bound}
   \mu(Q_{x,l_\infty})=0,
\end{equation}
which is a contradiction with the property of $x$ to be a point of $\spt\,\mu$.
In order to prove \eqref{absurd lower bound}, we assume that $\mu(\pa Q_{x,l})=0$, which is true for a.e. $l$. 

To prove \eqref{absurd lower bound} we construct a sequence of nested cubes $Q_i = Q_{x,l_i}$ such that, if $\beta$ is sufficiently small 
 it holds:
\begin{itemize}
 \item[(i)] $Q_0 = Q_{x,l}$;
 \item[(ii)] $\mu(\pa Q_{x,l_i})=0$;
 \item[(iii)] setting  $m_i:=\mu( Q_i)$ it holds:
 $$
 \frac{m_i^{\frac{1}{d}}}{l_i}<\beta;
 $$
 \item[(iv)] $m_{i+1}\leq(1-\frac{1}{k_1})m_i$, where $k_1$ is the constant in Theorem \ref{deformationcube} (6);
 \item[(v)] $(1-4\e_i)l_i\geq l_{i+1}\geq (1-6\e_i)l_i$, where 
 \begin{equation}\label{eq:epsi}
 \e_i:=\frac{1}{k\beta}\frac{m_i^{\frac{1}{d}}}{l_i}
 \end{equation}  
and  \(k=\max\{6,6 / (1-(\frac{k_1-1}{k_1})^\frac{1}{d})\}\) is a universal  constant. 
 \item [(vi)]   $\lim_i m_i =0$ and $\lim_i l_i >0$.
 \end{itemize}
Following \cite{DavidSemmes}, we are going to construct the sequence of cubes by induction: the cube $Q_0$ satisfies by construction hypotheses (i)-(iii). Suppose that cubes until step $i$ are already defined. 

Setting  $m_i^j:=\H^d(K_j \cap Q_i)$ we cover $Q_i$ with the family $\L_{\e_i l_i}(Q_i)$ of closed cubes 
with edge length $\e_i l_i$ as described in Section \ref{notation} and we set \(C_1^i\) and \(C_2^i\) for the corresponding sets defined in \eqref{cornici}. We define $Q_{i+1}$ to be the internal cube given by 
the construction, and we note that  $C^i_2$ and $Q_{i+1}$ are non-empty if, for instance,
\[ 
   \e_i =  \frac{1}{k\beta}\frac{m_i^{\frac{1}{d}}}{l_i} <\frac{1}{k}\le\frac{1}{6} ,
\]
which is guaranteed by our choice of $k$. Observe moreover that $C^i_1\cup C^i_2$ is a strip of 
width at most $2\e_i l_i$ around $\pa Q_i$, hence the side $l_{i+1}$ of $Q_{i+1}$ satisfies $(1-4\e_i)l_i\leq l_{i+1} <(1-2\e_i) l_i $. %, hence (v).

Now we apply Theorem \ref{deformationcube} to $Q_i$ with $E=K_j$ and $\e=\e_i l_i$, obtaining the map $\Phi_{i,j}=
\Phi_{K_j,\e_i l_i}$. We claim that, for every $j$ sufficiently large,
\begin{equation} \label{conditionji}
  m_i^j \leq k_1 (m_i^j-m_{i+1}^j) + o_j(1).
\end{equation}
Indeed, since $(K_j)$ is a minimizing sequence, by the definition of good class we have that
\begin{equation*}
\begin{split}
  m_i^j &\leq  m_i + o_j(1)\leq \H^d \left(\Phi_{i,j} \left(K_j \cap Q_0 \right) \right) +o_j(1) \\
     &=  \H^d \left( \Phi_{i,j} \left(K_j \cap Q_{i+1} \right) \right) + \H^d \left( \Phi_{i,j} \left(K_j \cap (C^i_1\cup C^i_2) \right) \right)+o_j(1)\\
        &\leq  k_1\H^d \left( K_j \cap (C^i_1\cup C^i_2)  \right) +o_j(1) = k_1 (m_i^j-m_{i+1}^j) + o_j(1).
        \end{split}
\end{equation*}
The last inequality holds because $\H^d \left( \Phi_{i,j} \left(K_j \cap Q_{i+1} \right) \right)=0$, 
otherwise by property (5) of Theorem \ref{deformationcube} there would exists $T \in \L^*_{\e_i l_i,d}(Q_{i+1})$ such that $\H^d(\Phi_{i,j}(K_j\cap T))=\H^d(T)$. 
Together with property (ii) this would imply
\begin{equation*}
l_i^d\e_i^d = \H^d(T) \leq \H^d\left( \Phi_{i,j}\left(K_j\right) \cap Q_i \right) \leq k_1\H^d\left( K_j \cap Q_i  \right) \leq k_1 m_i^j \rightarrow k_1 m_i
\end{equation*}
and therefore substituting \eqref{eq:epsi}
%as in \eqref{eq:4}-\eqref{eq:5}
$$ \frac{m_i}{ k^d\beta^d} \leq k_1 m_i,$$
which is false if $ \beta$ is sufficiently small ($m_i>0$ because $x\in\spt(\mu)$). % < \frac{1}{k \sqrt[d]{k_1}  }$.
Passing to the limit in $j$ in \eqref{conditionji} we obtain (iv): 
\begin{equation}\label{eq:stimai}
m_{i+1}\leq \frac{k_1-1}{k_1} m_i. 
\end{equation}
Since $l_{i+1}\geq(1-4\e_i)l_i$ we can slightly shrink the cube $Q_{i+1}$ to a concentric cube $Q'_{i+1}$ 
with $l'_{i+1}\geq(1-6\e_i) l_i>0$, $\mu(\pa Q'_{i+1})=0$ and for which (iv) still holds, since just $m_{i+1}$ decreased. 
With a slight abuse of notation we rename this last cube $Q'_{i+1}$ as $Q_{i+1}$.

We now show (iii). Using \eqref{eq:stimai} and condition (iii) for $Q_i$ we obtain
\begin{equation*}
\frac{m_{i+1}^\frac{1}{d}}{l_{i+1}} \leq \left(\frac{k_1-1}{k_1}\right)^\frac{1}{d}
\frac{m_i^\frac{1}{d}}{(1-6\e_i)l_i} < \left(\frac{k_1-1}{k_1}\right)^\frac{1}{d}
\frac{\beta}{1-6\e_i}
%\frac{(1-\eta)^\frac{1}{d}}{(1-\delta^{i+1})(1-2\e_i)} \beta.
\end{equation*}
The last quantity will be less than $\beta$ if
\begin{equation}\label{eq:i}
\left(\frac{k_1-1}{k_1}\right)^\frac{1}{d} \leq 1-6\e_i=1-\frac{6}{k \beta} \frac{m_i^\frac{1}{d}}{l_i} .
\end{equation}
In turn inequality \eqref{eq:i} is true because (iii) holds for $Q_i$, provided we choose $k\geq 6 / \big(1-(1-1/k_1)^\frac{1}{d}\big)$. 
Furthermore, estimating $\e_0<1/k$ by (iii) and (v) we  also  have $\e_{i+1}\leq\e_i$.

We are left to prove (vi): $\lim_i m_i=0$ follows directly from (iv); regarding the non degeneracy of the cubes, note that 
\begin{equation*}
\begin{split}
\frac{l_\infty}{l_0}:=\liminf_i \frac{l_i}{l_0} &\geq  \prod_{i=0}^{\infty}(1-6\e_i)=\prod_{i=0}^{\infty}\left (1-\frac{6}{k \beta}\frac{m_i^\frac{1}{d}}{l_i}\right ) \\
&\geq
\prod_{i=0}^{\infty}\left (1-\frac{6m_0^\frac{1}{d}}{k \beta l_0 \prod_{h=0}^{i-1}(1-6\e_h)}\left(\frac{k_1-1}{k_1}\right)^\frac{i}{d} \right)\\
 &\geq\prod_{i=0}^{\infty}\left (1-\frac{6}{k(1-6\e_0)^i}\left(\frac{k_1-1}{k_1}\right)^\frac{i}{d}\right),
\end{split}
\end{equation*}
where we used  $\e_h\leq \e_0$ in the last inequality. Since $\e_0<1/k$ the last product is strictly positive provided 
$$
k>\frac{6}{1-\left(\frac{k_1-1}{k_1}\right)^\frac{1}{d}} 
$$
which is guaranteed by our choice of $k$. 
We conclude that $l_\infty>0$ which ensures claim \eqref{absurd lower bound}.

\medskip

\noindent {\it Step two}: We fix $x\in \spt\, \mu \setminus H$, and prove that
\begin{equation}\label{monotonicity mu}
 r\mapsto \frac{\mu(B_{x,r})}{r^d}\quad\mbox{is increasing on $(0,d_x)$.}
\end{equation}
The proof is a straightforward adaptation of the corresponding one in \cite[Theorem 2]{DelGhiMag}, and amounts 
to prove a differential inequality for the function $f(r):=\mu(B_{x,r})$. In turn, this inequality 
is obtained in a two step approximation: first one exploits the rectifiability of the minimizing sequence $(K_j)$
 and property \eqref{inf good class} to compare $K_j$ with the cone competitor $\C_{x,r}(K_j)$, see \eqref{cone comp}. 
 The comparison a priori is only allowed with elements of $\mathcal{P}(H)$, so for almost every $r<d_x$ 
 it holds:
\begin{equation*}
\begin{split}
 f_j(r) &= \H^d(K_j)-\H^d(K_j\setminus \overline{B_{x,r}})\le m_0 + o_j(1)-\H^d(K_j\setminus \overline{B_{x,r}}) \\
 &\le o_j(1)+ \inf_{\substack{K'\in \mathcal{P}(H)}} \H^d(K') -\H^d(K_j\setminus \overline{B_{x,r}}) 
 \leq  o_j(1)+ \inf_{\substack{K'\in \mathcal{P}(H) \\ K'\setminus \overline{B_{x,r}} = K_j\setminus \overline{B_{x,r}}}} 
 \H^d(K'\cap \overline{B_{x,r}}),
 \end{split}
 \end{equation*}
 where $f_j(r):=\H^d(K_j\cap B_{x,r})$ and \(\eta_j\) is infinitesimal. Nevertheless $K_j$ can be 
 compared with its cone competitor, up to an error infinitesimal in $j$, thanks to Lemma \ref{l:10}.
 We recover
 \begin{equation*}
\begin{split}
  \inf_{\substack{K'\in \mathcal{P}(H) \\ K'\setminus \overline{B_{x,r}} = K_j\setminus \overline{B_{x,r}}}} 
  \H^d(K'\cap \overline{B_{x,r}})   &\leq o_j(1)+ \H^d(\C_{x,r}(K_j)\cap \overline{B_{x,r}})  \\ 
  &\leq o_j(1) +\frac rd \H^{d-1}(K_j\cap\pa B_{x,r})  =  o_j(1)+  \frac rd f_j'(r)\,.
 \end{split}
 \end{equation*} 
 One then passes to the limit in $j$ and obtains the desired monotonicity formula.
We refer to \cite[Theorem 2]{DelGhiMag} for the conclusion of the proof of \eqref{monotonicity mu}. 

\medskip

\noindent {\it Step three}:  By \eqref{lower density estimate mu ball} and \eqref{monotonicity mu} the $d$-dimensional density 
of the measure  $\mu$, namely:
\[
\theta(x)=\lim_{r\to 0^+}\frac{f(r)}{\omega_d r^d}\ge{\theta_0}\, ,
\]
exists, is finite and positive $\mu$-almost everywhere. Preiss' Theorem \ref{Preiss} 
implies that $\mu = \theta \H^d \res \tilde{K}$ for some countably $\H^d$-rectifiable set $K$ and some positive Borel function $\theta$.
Since $K$ is the support of $\mu$, $\H^d (\tilde{K}\setminus K) =0$. 
On the other hand, by differentiation of Hausdorff measures,  \eqref{lower density estimate mu ball} yields $\H^d (K\setminus \tilde{K}) =0$. Hence $K$ is  \(d\)-rectifiable and $\mu = \theta \H^d\res K$. 

\medskip

\noindent {\it Step four}: We prove that $\theta(x)\ge 1$ for every $x\in K$ such that the approximate tangent space
to $K$ exists (thus, $\H^d$-a.e. on $K$). Fix any such $x\in K\setminus H$ without loss of generality we can suppose that $\pi = \{x_{d+1}= ... = x_n =0\}$ 
is the approximate tangent space to $K$ at $x$, and that $x=0$: in particular,
\[
 \H^d\res \frac{K}{r} \rightharpoonup^* \H^d\res \pi\,,\qquad\mbox{as $r\to 0^+$}\,.
\]
The above convergence, together with the lower density estimates \eqref{lower density estimate mu ball} imply that, for every $\e>0$ there is $\rho>0$ such that
\begin{equation}\label{e:uniforme}
 K\cap B_{r}  \subset \left\{|y_{d+1}|,...,|y_{n}|< \frac \e2 \,r\right\} \qquad \forall r<\rho\, .
\end{equation}

Let us now assume, by contradiction,  that $\theta(0)<1$.  Thanks to \eqref{monotonicity mu} and \eqref{e:uniforme}, there exist $r \in (0,d_x)$ and $\alpha <1$ such that
\begin{equation}\label{eq:7}
\frac{\mu (Q_{\rho})}{\rho^d} \le\alpha <1, \quad \mu(\pa Q_{r}) = 0, \quad K\cap (Q_{\rho} \setminus R_{\rho,\e \rho}) = \emptyset \qquad \forall \,\rho\le r\,. 
\end{equation}
In  particular, since \(\mu_j\) are weakly converging to \(\mu\) we get that for \(j\) large
\begin{equation}\label{eq:10}
\frac{\mu_j (Q_{r})}{r^d} \le \alpha < 1 \quad \mbox{and} \quad  \mu_j(Q_{r} \setminus R_{r,\e r}) =o_j(1), 
\end{equation}
We now wish to clear the small amount of mass appearing in the complement of $R_{r,\e r}$: we achieve this by 
repeatedly applying Theorem \ref{deformationcube}. % on every coordinate $x_{d+1},\dots,x_n$. 
We set $Q_{r}\cap \{x_{d+1}\geq \frac \e2 \,r\}=:R$, and we apply Theorem \ref{deformationcube} to this rectangle with parameter $\e r$ and 
$E=K^0_j:=K_j$, obtaining the map  $\vphi_{1,j}$. We recall that the obtained constant $k_1$ for the area bound is universal, since it depends on the side ratio of $R$, which is bounded from below by $1$ and from above by $4$, provided $\varepsilon$ small enough. We set $K^1_j:=\vphi_{1,j}(K^0_j) $ and 
repeat the argument with $Q_{r}\cap \{x_{d+1}\leq- \frac \e2 \,r\}=:R$ and $E:=K^1_j$, obtaining the map $\vphi_{2,j}$. We again set $K^2_j:=\vphi_{2,j}(K^1_j) $ and iterate this procedure to the rectangles $Q_{r}\cap \{x_{d+2}\geq \frac \e2 \,r\},...,Q_{r}\cap \{x_{n}\leq- \frac \e2 \,r\}$. After $2(n-d)$  iteration, we set 
$$K_j^{2(n-d)}:= \vphi_{2(n-d),j} \circ ... \circ \vphi_{1,j} (K_j).
$$
We are going to use the cube $Q_{r(1-\sqrt{\e})}$ because, taking $\varepsilon$ small enough, then $\sqrt{\e}>4C\e$, where $C>1$ is the side ratio considered before. 
This allows us to claim that 

\begin{equation}\label{em}
\H^d(K_j^{2(n-d)}\cap (Q_{r(1-\sqrt{\e})}\setminus R_{r(1-\sqrt{\e}),6\e r}))=0.
\end{equation} 
Otherwise  there would exist a $d$-face of a smaller rectangle $T \subset (Q_{r} \setminus R_{r,\e r})$ such that 
\[
\H^d(K_j^{2(n-d)}\cap T)=\H^d(T)\geq  \e^d r^d\,,
\]
 which would lead to the following contradiction for $j$ large:
\begin{equation*}
\e^d r^d \leq \H^d(T) \leq \H^d\left( K_j^{2(n-d)} \cap (Q_{r} \setminus R_{r,\e r}) \right) \leq k_1^{2(n-d)}\H^d\left( K_j \cap (Q_{r} \setminus R_{r,\e r})  \right) =o_j(1)
\end{equation*}
In particular, we cleared any measure on every slab 
\[
\bigcup_{i=d+1}^{n}\left \{3\e r<|x_i|<(1-\sqrt{\e})\frac r2 \right \}\cap Q_{r(1-\sqrt{\e})}.
\]
We want now to  construct a map $P \in  \D(0,r)$, collapsing $R_{r(1-\sqrt{\e}),6\e r}$  onto the tangent plane.  To this end, for  $x\in \R^n$,  $x=(x',x'')$ with $x'\in \R^d$ and $x''\in \R^{n-d}$ we set
\begin{equation}\label{infinito}
\|x'\|:=\max\{|x_i|: i=1,\dots,d\}\qquad \|x''\|:=\max\{|x_i|:  i=d+1,\dots,n\}
\end{equation}
and we define  $P$ as follows:
\begin{equation} \label{P}
P(x)=
\begin{cases}
\big(x', g(\|x'\|)\frac{(\|x''\|-3\e r)_+}{1-6\e}\frac{x''}{\|x''\|}+(1-g(\|x'\|))x''\big)\quad&\textrm{if \(\max\{\|x'\|,\|x''\|\}\le r/2\)}\\
\Id&\textrm{otherwise,}
\end{cases}
\end{equation}
where \(g: [0,r/2]\to [0,1]\) is a compactly supported cut off function such that
\[
g\equiv 1\quad \textrm{on \([0,r(1-\sqrt{\e})/2]\)}\qquad \textrm{and} \qquad |g'|\le 10/r\sqrt{\e}\,.
\]
 It is not difficult to check that \(P\in \D(0,r)\)  and that \({\rm Lip}  \,P\le 1+C\sqrt{\e}\) for some dimensional constant \(C\).

We now set $\widetilde{K_j}:=P(K_j^{2(n-d)})$, which verifies, thanks to \eqref{em},
\begin{equation}\label{empty}
\H^d\Big(\widetilde{K_j}\cap \big(Q_{(1-\sqrt{\e})r}\setminus Q_{(1-\sqrt{\e})r}^d\big)\Big)=0
\end{equation}
and
\begin{equation}\label{small}
\begin{split}
&\H^d\big(\widetilde{K_j}\cap \big(Q_{r} \setminus Q_{r(1-\sqrt{\e})}\big)\big) \leq (1+C\sqrt{\e})^d \H^d\big(K_j^{2(n-d)}\cap \big(Q_{r} \setminus Q_{r(1-\sqrt{\e})}\big)\big) \\
&\leq (1+C\sqrt{\e})k_1^{2(n-d)} \H^d\big(K_j\cap \big(Q_{r} \setminus (Q_{r(1-\sqrt{\e})}\cup R_{r,\e r})\big)\big) \\
& \quad +(1+C\sqrt{\e})\H^d\big(K_j\cap \big(R_{r,\e r} \setminus Q_{r(1-\sqrt{\e})}\big)\big) \\
&\leq o_j(1)+ (1+C\sqrt{\e})\H^d\big(K_j\cap \big(R_{r,\e r} \setminus Q_{r(1-\sqrt{\e})}\big)\big)\,,
\end{split}
\end{equation}
where in the last inequality we have used \eqref{eq:10}. Moreover, by using \eqref{eq:7},  \eqref{eq:10} and \eqref{empty} we also have that, for \(\e\) small and  \(j\) large:
\begin{equation}\label{dens}
\begin{split}
\frac{\H^d(\widetilde{K_j}\cap Q^d_{r(1-\sqrt{\e})})}{r^d(1-\sqrt{\e})^d}=\frac{\H^d(\widetilde{K_j}\cap Q_{r(1-\sqrt{\e})})}{r^d(1-\sqrt{\e})^d} &\leq (1+C\sqrt{\e})\frac{\H^d(K_j^{2(n-d)}\cap Q_{r})}{r^d}\\
&\leq (1+C\sqrt{\e}) \frac{\H^d(K_j\cap Q_{r})+o_j(1)}{r^d} \\
&\le \alpha + o_j(1) < 1.
\end{split}
\end{equation}
As a consequence of \eqref{dens} and the compactness of $\widetilde{K_j}$, there exist $y'_j \in Q_{(1-\sqrt{\e})r}^d$ and $\delta_j >0$ such that, if we set $y_j:=(y'_j, 0)$, then
\begin{equation}\label{eq:11}
\widetilde{K_j} \cap B^{d}_{y_j,\delta_j} = \emptyset \quad \mbox{and} \quad B^{d}_{y_j,\delta_j}\subset Q_{(1-\sqrt{\e})r}^d.
\end{equation}
After the last deformation, our set $\widetilde{K_j}\cap Q_{r(1-\sqrt{\e})}$ lives on the tangent plane and we want to use the property \eqref{eq:11} to collapse $\widetilde{K_j}\cap Q_{r(1-\sqrt{\e})}$ into $\left (\pa Q_{(1-\sqrt{\e})r}^d\right ) \times \{0\}^{n-d}$. To this end let us  define for every $j \in \N$, the following Lipschitz map:
\begin{equation*}
\varphi_j(x)= 
\begin{cases}
 \big (x'+ z'_{j,x},\, x''\big) & \mbox{if } x \in R_{r(1-\sqrt{\e}), r}\\ 
 x & \mbox{otherwise},
\end{cases}
\end{equation*}
with 
$$ z'_{j,x} := \min \left \{1,\frac{\left |x'-y'_j\right |}{\delta_j}\right \}\frac{\left (r-4\|x''\|\right)_+}{r}\gamma_{j,x}(x'-y'_j),$$
where $\gamma_{j,x}>0$ is such that $x'+\gamma_{j,x}(x'-y'_j) \in \pa Q_{(1-\sqrt{\e})r}^d \times \{0\}^{n-d}$ and \(\|x''\|\) is defined in \eqref{infinito}. One can easily check that $\varphi_j \in  \D(0,r)$. Moreover, setting  $\varphi_j(\widetilde{K_j})=: K'_j$ we have that 
$$K'_j \setminus Q_{r} = K_j \setminus Q_{r}$$
and 
\begin{equation}\label{:)}
\H^d(K_j'\cap Q_{r(1-\sqrt{\e})})=0,
\end{equation}
thanks to  \eqref{empty}, since 
\[
\H^d\Big (\pa Q_{(1-\sqrt{\e})r}^d \times \{0\}^{n-d}\Big )=0.
\] 
Since \(\mathcal{P} (H)\) is a good class, by \eqref{inf good class} there exists a sequence of competitors $(J_j)_{j \in \N} \subset \mathcal{P} (H)$ such that $J_j\setminus \overline B_{0,r} =K_j\setminus \overline
B_{0,r} $ and $\H^d (J_j)= \H^d (K'_j)+o_j(1)$. Hence, thanks to \eqref{small} and \eqref{:)} we get
\begin{equation*}
\begin{split}
\H^d(K_j)-\H^d(J_j) &\geq \H^d(K_j)-\H^d(K'_j) -o_j(1) = \H^d(K_j\cap Q_{r})-\H^d(K'_j\cap Q_{r}) -o_j(1)\\
&\geq \H^d\left(K_j\cap Q_{r(1-\sqrt{\e})} \right) + \H^d\left(K_j\cap (R_{r,\e r} \setminus Q_{r(1-\sqrt{\e})}) \right) + \\
&\quad-o_j(1)-(1+C\sqrt{\e})\H^d\left(K_j\cap (R_{r,\e r} \setminus Q_{r(1-\sqrt{\e})}) \right)\\
&\geq  \H^d\left(K_j\cap Q_{r(1-\sqrt{\e})} \right)-C\sqrt{\e}\H^d\left(K_j\cap (R_{r,\e r} \setminus Q_{r(1-\sqrt{\e})}) \right)-o_j(1).
\end{split}
\end{equation*}
Passing to the limit as \(j\to \infty\), and using \eqref{muj va a mu}, \eqref{lower density estimate mu ball} and \eqref{eq:7} we get that
\[
\begin{split}
\liminf_{j} \H^d(K_j)&\ge \liminf_{j} \H^d(J_j)+\mu(Q_{r(1-\sqrt{\e})})-C\sqrt{\e}r^d\\
&\ge \liminf_{j} \H^d(J_j)+(\theta_0(1-\sqrt{\e})^d-C\sqrt{\e})r^d.
\end{split}
\]
Since, for \(\e\)  small,  this is in contradiction with \(K_j\) be a minimizing sequence we finally conclude that \(\theta(0)\ge 1\).
 
 \medskip

\noindent {\it Step five}: We prove that $\theta(x)\leq 1$ for every $x\in K$ such that the approximate tangent space
to $K$ exists. % (thus, $\H^d$-a.e. on $K$). 
Arguing by contradiction, we assume that $\theta(x)=1+\sigma>1$ for some $x$ where $K$ admits an approximate tangent plane $T$. As usually we assume that $x=0$ and $T = \{y: y_{d+1},...,y_{n} =0\}$. By the monotonicity of the density  established in  Step 2, for every \(\e>0\)
we can find $r_0>0$ such that
  \begin{equation}
    \label{mancoilnome}
      K\cap Q_{r}\subset R_{r,\e r}\,,\qquad 1+\sigma\le\frac{\mu(Q_{r})}{r^d}\le1+\s+ \e\,\s\,, \qquad \forall\, r\le r_0\, .
  \end{equation}
Let us also  note that for  every $r<r_0$ there exists  $j_0 $ such that
  \begin{equation}
    \label{buonpunto}
      \H^d(K_j\cap Q_{r})>\Big(1+\frac{\s}2\Big)\,r^d\,,\qquad \H^d( (K_j\cap Q_{r})\setminus R_{r,\e r})<\frac\s{4}\,r^d,\qquad\forall j\ge j_0\,,
  \end{equation}
 Consider the map $P:\R^{n}\to\R^{n}\in \D(0,r)$ with $\Lip P\le 1+C\,\sqrt\e$ defined in \eqref{P} 
 which collapses \(R_{(r(1-\sqrt{\e}),\e r}\) onto the tangent plane. By exploiting the fact the \(\mathcal P(H)\) is a good class we find that
  \begin{equation*}
  \begin{split}
    \H^d(K_j\cap Q_{r})-o_j(1) &\le
    \underbrace{\H^d( P(K_j\cap R_{(1-\sqrt{\e})r,\e r}))}_{I_1}
    +
    \underbrace{\H^d( P(K_j\cap(R_{r,\e r}\setminus R_{(1-\sqrt{\e})r,\e r})))}_{I_2} \\
    &+
    \underbrace{\H^d( P(K_j\cap (Q_{r}\setminus R_{r,\e r})))}_{I_3}\,.
    \end{split}
  \end{equation*}
  By construction, $I_1\le r^d$, while, by \eqref{buonpunto}, $\H^d(K_j\cap Q_{r})>(1+(\s/2))r^d$ and
  \[
  I_3\le(\Lip P)^d\,\H^d(K_j\cap (Q_{r}\setminus R_{r,\e r}))<(1+C\,\sqrt\e)^d\,\frac{\s}{4}\,r^d\,.
  \]
  Hence, as $j\to\infty$,
  \[
  \Big(1+\frac{\s}2\Big)r^d\le r^d+\liminf_{j\to\infty}I_2+(1+C\,\sqrt\e)^d\,\frac{\s}{4}\,r^d\,,
  \]
  that is,
  \begin{equation}\label{this}
  \Big(\frac12-\frac{(1+C\,\sqrt\e)^d}4\Big)\,\s\le\liminf_{j\to\infty}\frac{I_2}{r^d}\,.
  \end{equation}
 By \eqref{mancoilnome}, we finally estimate that
  \begin{eqnarray}\nonumber
    \limsup_{j\to\infty}\,I_2&\le&(1+C\sqrt{\e})^d\,\mu(Q_{r}\setminus Q_{(1-\sqrt\e)r})
    \\\label{that}
    &\le&(1+C\sqrt{\e})^d\Big((1+\s+\e\s)-(1+\s)(1-\sqrt\e)^d\Big)\,r^d
  \end{eqnarray}
  By choosing \(\e\) sufficiently small, \eqref{this} and \eqref{that} provide the desired contradiction. Hence \(\theta \le 1\) and, combining this with the previous step, \(\mu=\H^{d}\res K\).

\medskip

\noindent {\it Step six}: We now show that the canonical density one rectifiable varifold associated $K$ is stationary in $\mathbb R^{n}\setminus H$. In particular, applying Allard's regularity theorem, see  \cite[Chapter 5]{SimonLN}, we will deduce that there exists an $\H^d$-negligible closed set $\Sigma\subset K$ such that $\Gamma = K \setminus \Sigma$ is a real analytic  manifold.  Since   being a stationary varifold is a local property, to prove our claim it is enough to show that for every ball \(B \cc \R^n\setminus H\) we have
 \begin{equation}\label{diffeomin}
 \H^d(K)\le\H^d(\phi(K))
 \end{equation}
  whenever $\phi$ is a diffeomorphism such that \(\spt\{\phi-\Id\}\subset B\). Indeed,  by exploiting \eqref{diffeomin} with  \(\phi_t=\Id+tX\), \(X\in C_c^1(B)\) we deduce the desired stationarity property.
  
 To prove \eqref{diffeomin} we argue as in \cite[Theorem 7]{DelGhiMag}.  Given $\e>0$ we can find $\de>0$ and a compact set $\hat{K} \subset K\cap B$ with $\H^d(K\setminus \hat{K})\cap B )<\e$ such that $K$ admits an approximate tangent plane $\pi(x)$ at every $x\in\hat K$,
 \begin{eqnarray}
   \label{approx cont}
    \sup_{x\in \hat{K}}\sup_{y\in B_{x,\de}}|\nabla\phi(x)-\nabla\phi(y)|\le \e\,,\qquad
 \sup_{x\in \hat{K}}\sup_{y\in \hat{K}\cap B_{x,\de}}{\rm d}(\pi(x),\pi(y))<\e\,,
 \end{eqnarray}
 where \({\rm d}\) is a distance on \(G(d)\), the \(d\)-dimensional Grassmanian. Moreover, denoting by $S_{x,r}$ the set of points in $B_{x,r}$ at distance at most $\e\,r$ from $x+\pi(x)$, then $K\cap B_{x,r}\subset S_{x,r}$ for every $r<\de$ and $x\in\hat{K}$. By Besicovitch covering theorem we can find a finite disjoint family of closed balls $\{\overline B_i\}$ with $B_i=B_{x_i,r_i}\subset B\cc \R^{n}\setminus H$, $x_i\in \hat{K} $, and $r_i<\de$, such that $\H^d(\hat{K} \setminus\bigcup_iB_i)<\e$. By exploiting the construction of Step four, we can find $j(\e)\in\N$ and maps $P_i: \R^n\to \R^n$ with $\Lip(P_i)\le 1+C\,\sqrt{\e}$ and $P_i=\Id$ on \(B_i^c\), such that, for a certain $X_i\subset S_i=S_{x_i,\e\,r_i}$,
 \begin{equation}   \label{ober1}
 \begin{aligned}
   &P_i(X_i)\subset B_i\cap(x_i+\tau(x_i))\,,
   \\
   &\H^d\Big(P_i\big((K_j\cap B_i)\setminus X_i\big)\Big)\le C\,\sqrt\e\,\om_d\,r_i^d\,,\qquad\forall j\ge j(\e)\,.
 \end{aligned}
 \end{equation}
Exploiting  \eqref{approx cont}, \eqref{ober1},  the area formula and that   $\om_d\,r_i^d\le\H^d(K\cap B_i)$ (by the monotonicity formula), and setting $\a_i=\H^d((K\setminus \hat K)\cap B_i)$, we get (denoting with \(J_d^{\pi}\) the \(d\)-dimensional tangential jacobian with respect to the  plane \(\pi\) and by \(J^K_d\) the one with respect to \(K\))
 \begin{equation}\label{diavolo}
 \begin{split}
    \H^d(\phi(P_i(K_j\cap X_i)))&=\int_{P_i(K_j\cap X_i)}J_d^{\pi(x_i)}\phi(x)\,d\H^d(x)\le(J^{\tau(x_i)}\phi(x_i)+\e)\,\om_d\,r_i^d
   \\
   &\le(J_d^{\pi(x_i)}\phi(x_i)+\e)\,\H^d(K\cap B_i)
   \le(J_d^{\pi(x_i)}\phi(x_i)+\e)\,(\H^d(\hat K\cap B_i)+\a_i)
   \\ 
   &\le\int_{\hat K\cap B_i}(J_d^{K}\phi(x)+2\e)\,\,d\H^d(x)+((\Lip\phi)^d+\e)\,\a_i
   \\
   &=\H^d(\phi(\hat K\cap B_i))+2\e\,\H^d(K\cap B_i)+((\Lip\phi)^d+\e)\,\a_i\,,
 \end{split}
 \end{equation}
 where in the last identity we have used the area formula and the injectivity of $\phi$.  Since \(P_i=\Id\) on \(B_i^c\), \(\phi=\Id\) on \(B^c\),  \(B_i\subset B\) and the balls \(B_i\) are disjoint, the map \(\tilde \phi\) which is equal to \(\phi\) on \(B\setminus \cup_{i} B_i\), equal to the identity on \(B^c\) and equal to \(\phi\circ P_i\) on \(B_i\) is well defined. Moreover, by \eqref{diavolo}, we get
 \[
 \H^d(\tilde \phi(K_j))\le \H^d(\phi(K))+C\e
 \] 
 where \(C\) depends only on \(K\). By exploiting the definition of good class, we get that 
 \[
 \H^d(K)\le \H^d(\tilde \phi(K_j))+\eta_j\le \H^d(\phi(K))+C\e+o_j(1).
 \]
 Letting \(j\to \infty\) and \(\e\to 0\) we obtain \eqref{diffeomin}.

\medskip
\noindent {\it Step seven}: We finally address the dimension of the singular set. Recall that, by monotonicity, the density function 
\[
\Theta^d(K,x)=\lim_{r\to 0}\frac{\H^d(K\cap B_{x,r})}{\omega_d r^d}
\]
is everywhere defined in $\R^n\setminus H$ and equals $1$ $\H^d$-almost everywhere in $K$. 
 Fixing $x\in K$ and a sequence $r_k\downarrow 0$, the monotonicity formula, 
 the stationarity of $\H^d\res K$ and the compactness theorem for integral varifolds 
 \cite[Theorem 6.4]{Allard} imply that (up to subsequences)
 \begin{equation}\label{tanvar}
 \H^d\res\left(\frac{K-x}{r_k}\right)\rightharpoonup V\quad\mbox{ locally in the sense of varifolds,}
 \end{equation}
with
\begin{itemize}
\item[(a)] $V$ is a stationary integral varifold: in particular $\Theta^d(\|V\|,y)\geq 1$ for $y\in\spt(V)$;
\item[(b)] $V$ is a cone, namely $(\delta_{\lambda})_{\#}V=V$, where  $\delta_{\lambda}(x) = \lambda x$, $\lambda>0$;
\item[(c)] $\Theta^d(\|V\|,0)  = \Theta^d(K,x) \geq \Theta^d(\|V\|,y)$ for every $y\in\R^n$.
\end{itemize}
Recall that the tangent varifold $V$ depends (in principle) on the sequence $(r_k)$.
We denote by ${\rm TanVar}(K,x)$ the 
(nonempty) set of all possible limits $V$ as in \eqref{tanvar} varying among all sequences along which \eqref{tanvar} holds. % $(r_k)$. 
Given a cone $W$ we set
%For every $a\in\R^n$ we let $\tau_a: x\mapsto x+a$ be the translation by $a$: given a cone $V$ we set
\begin{equation}\label{spine}
{\rm Spine}(W):=\{y\in\R^n : \Theta^d(\|W\|,y) = \Theta^d(\|W\|,0) \}:
\end{equation}
by \cite[2.26]{almgrenBIG} ${\rm Spine}(W)$ is a vector subspace of $\R^n$, see also \cite{whitestrat}. 
We can stratify $K$ in the following way: for every $k=0,\dots,n$ we let 
\begin{equation*}
A_k := \{x \in K : \textrm{ for all } V\in {\rm TanVar}(K,x),\,\dim {\rm Spine}(V)\leq k\}.
\end{equation*}
Clearly  $A_0\supset\dots\supset A_{d+1}=\dots=A_n=\emptyset$; %and  $K = \bigcup_{k=0}^d A_k$,  
moreover it holds: $\dim_\H A_k\leq k$, see \cite[2.28]{almgrenBIG}. 
In order to prove our claim, we need to show that $A_d\setminus A_{d-1}\subset K\setminus \Sigma$, %$A_d\setminus A_{d-1}\subset \R^n\setminus(H\cup S)$, 
where $\Sigma$ as in Step six is the singular set of $K$, 
namely that every point in $K$ having at least one tangent cone of maximal dimension $d$ must be regular (note that, as the example of complex varieties shows, this is not true in general for a stationary varifold). 

First, note that if $x\in A_d\setminus A_{d-1}$ and $V\in{\rm TanVar}(K,x)$ satisfies $\dim {\rm Spine}(V)=d$, then $V = \H^d\res {\rm Spine}(V)$: indeed up to a 
rotation $\spt(V) = {\rm Spine}(V)\times \Gamma$, where $\Gamma$ is a cone in $\R^{n-d}$. If $\Gamma\neq\{0\}$ 
then $\Theta^d(\|V\|,0)>\Theta^d(\|V\|,y)$ for any $y\in {\rm Spine}(V)\setminus\{0\}$, which contradicts \eqref{spine}. Hence
by \eqref{spine} and (c), $\Theta^d(\|V\|,0)$ is an integer 
and $V = \Theta^d(K,x)\H^d\res {\rm Spine}(V)$ (see also \cite[Theorem 2.26 (4)]{almgrenBIG}). 
Second, the density lower bound $\Theta^d(K,\cdot)\geq 1$ and \eqref{tanvar} imply that for every  $\e>0$ 

$$
\frac{K-x}{r}\subset U_\e({\rm Spine}(V))\quad\mbox{and}\quad
\H^d\left(\frac{K-x}{r}\cap Q_{0,1}\right)\leq (1+\e)\Theta^d(K,x)%\H^d(B_{0,1}^d)
$$
if $r$ is sufficiently small. By arguing as in Step five (i.e. roughly speaking comparing $K$ with $P(K)$ in $Q_{x,r}$, where $P$ is the squeezing map \eqref{P} although one has to rigorously get through the minimizing sequence \(K_j\)) %in the previous 
we obtain 
$$
\H^d\left(\frac{K-x}{r}\cap Q_{0,1}\right)\leq (1+C\sqrt{\e}).
$$
Letting $r\downarrow 0$ thanks to \eqref{tanvar} we obtain 
%$\|V\|(B_{0,1})\le (1+C\sqrt{\e})\H^d(B_{0,1}^d)$, 
$\|V\|(Q_{0,1})\le (1+C\sqrt{\e})$, implying $\Theta^d(K,x) =1$. 
We therefore fall into the hypotheses of Allard's regularity Theorem 
\cite[Regularity Theorem, Section 8]{Allard}, $K\cap Q_{x,\frac{r}{2}}$ is a real analytic 
submanifold. Equivalently $x\not\in \Sigma$.
\end{proof}

\section{Proof of Theorems \ref{thm plateau} and \ref{thm david}}\label{harrisonanddavid}

In this Section we prove Theorem \ref{thm plateau} and \ref{thm david}. With Theorem \ref{thm generale} at hand, the proofs are quite similar  to the corresponding ones in \cite{DelGhiMag} (see Theorems 4 and 7 there), hence  we limit ourselves to provide a short sketch  underlying only the main differences. 

\begin{proof}
[Proof of Theorem \ref{thm plateau}] 
We start by proving that $\F(H,\CC)$ is a good class in the sense of Definition \ref{def good class}:  
let $\widetilde{K}\in\F (H,\CC)$, $x\in K$, $r\in (0, \dist (x, H))$ and $\varphi \in \D(x,r)$. 
We show that $\vphi(\widetilde{K})\in\F(H,\CC)$ arguing by contradiction: assume that $\g(S^{n-d})\cap \vphi(\widetilde{K}) = \emptyset$
for some $\g\in\CC$ and, without loss of generality, suppose also that $\g(S^{n-d})\cap(\widetilde{K}\setminus B_{x,r})=\emptyset$. 
By Definition \ref{d:deform} there exists a sequence
$$
(\vphi_{j})\subset \mathfrak D(x,r)\qquad\mbox{ such that }\qquad\lim_{j}\|\vphi_{j} - \vphi\|_{C^0}=0.
$$ 
Since $\g(S^{n-d})$ is compact and $\vphi_j=\mbox{Id}$ outside $ B_{x,r}$, for $j$ sufficiently large $\g(S^{n-d})\cap \vphi_j(\widetilde{K}) = \emptyset$; moreover $\vphi_j$ is invertible, hence  $\vphi_{j}^{-1}\left(\g(S^{n-d})\right )\cap \widetilde{K} = \emptyset$. 
But the property for $\vphi_{j}$ of being isotopic to the identity implies $\vphi_{j}^{-1} \circ \g \in \CC$, which contradicts $V\in\F (H,\CC)$. This proves (a).

Given a minimizing sequence $(K_j )\subset\F(H,\CC)$ which consists of rectifiable sets, we can therefore find 
a set $K$ with the properties stated in Theorem \ref{thm generale}. In order to conclude (b), namely that $K\in\F(H,\CC)$, 
we refer to \cite[Theorem 4(b)]{DelGhiMag}: the proof is the same.

\end{proof}

\begin{proof}[Proof of Theorem \ref{thm david}]  As already observed in Remark \ref{remark david}, $\A (H, K_0)$ is a good class  and we can therefore apply Theorem \ref{thm generale}. We thus know that $\H^d\res K_j\weak\mu= \ \H^d\res K\) and that  \(K\) is a smooth set away from \(H\) and from a relatively closed set \(\Sigma\)  of dimension less or equal than \((d-1)\). The conclusion of the proof can now be obtained by repeating verbatim Steps 4 and 6 in the proof of Theorem 7 in \cite{DelGhiMag}.

\end{proof}

\end{document}